\renewcommand{\leq}{\leqslant}
\renewcommand{\le}{\leqslant}
\renewcommand{\geq}{\geqslant}
\renewcommand{\ge}{\geqslant}
\newcommand{\R}{\mathbb{R}}
\newcommand{\N}{\mathbb{N}}
\newtheorem{thm}{Theorem}[section]
\newtheorem{lem}[thm]{Lemma}
\newtheorem{cor}[thm]{Corollary}
\newtheorem{prop}[thm]{Proposition}
\newcommand{\rl}{\mathbb R}
\newcommand{\ep}{\varepsilon}
\newcommand{\ld}{\lambda}
\newcommand{\mm}{{\mathcal M}}
\title[Convex sets evolving
by volume preserving
fractional mean curvature flows]{Convex sets evolving \\
by volume preserving
fractional mean curvature flows}
\author[Eleonora Cinti]{Eleonora Cinti}
\address{E.C., Dipartimento di Matematica, Universit\`a degli Studi di Bologna,
Piazza di Porta San Donato 5, 40126 Bologna, Italy}
\email{eleonora.cinti5@unibo.it}
\author[Carlo Sinestrari]{Carlo Sinestrari}
\address{C.S., Dipartimento di Ingegneria Civile e Ingegneria Informatica, 
Universit\`a di Roma Tor Vergata,
Via del Politecnico,
00133 Rome, Italy}  \email{sinestra@mat.uniroma2.it}
\author[Enrico Valdinoci]{Enrico Valdinoci}
\address{E.V., 
Department of Mathematics and Statistics,
University of Western Australia,
35~Stirling Highway, WA 6009 Crawley, Australia. }
\email{enrico.valdinoci@uwa.edu.au}
\thanks{E.C. was supported
 by MINECO grant MTM2014-52402-C3-1-P
and is part of the Catalan research group 2014 SGR 1083.
E.C. and  C.S. were supported by the group 
GNAMPA of INdAM Istituto Nazionale di Alta Matematica.
E.C. and E.V. were supported by the European Research
Council Starting Grant 
``EPSILON'' 
Elliptic Pde's and Symmetry of Interfaces and Layers for Odd Nonlinearities
n. 277749.
E.V. was supported by the Australian Research Council
Discovery Project ``NEW''
Nonlocal Equations at Work n. DP170104880. The authors are members of INdAM/GNAMPA}
\subjclass[2010]{53C44, 35R11, 35B40.}
\keywords{Geometric evolution equations, Fractional partial 
differential equations,
Fractional perimeter, Fractional mean curvature flow,
Asymptotic behavior of solutions}
\begin{document}

\begin{abstract}
We consider the volume preserving
geometric evolution of the boundary of a set
under fractional mean curvature. We show that
smooth convex solutions maintain their fractional curvatures bounded
for all times, and the long time asymptotics
approach round spheres.
The proofs are based on apriori estimates
on the inner and outer radii of the solutions.
\end{abstract}

\maketitle

\section{Introduction}

Let $E_0 \subset \rl^n$ be a smooth compact convex set, and let $\mm_0=\partial E_0$. For a fixed $s \in (0,1)$, we consider the evolution of $\mm_0$ by volume preserving fractional mean curvature flow, that is
the family of immersions $F:\mm_0 \times [0,T) \to \rl^n$ which satisfies 
\begin{equation}\label{flow}
\left\{ \begin{array}{ll}
\partial_t F(p,t) = \left[- H_s(p,t) +h(t) \right] \, \nu(p,t), \quad & p \in \mm_0, t \geq 0 \medskip \\
F(p,0)=p & p \in \mm_0.
\end{array}\right.
\end{equation}

Here $H_s(p,t)$ and $\nu(p,t)$ denote respectively the fractional mean curvature of order $s$  and the normal vector of the hypersurface $\mm_t:=F(\mm_0,t)$ at the point $F(p,t)$, while the function $h(t)$ is defined as
\begin{equation}\label{volpr}
h(t)=\frac{1}{|\mm_t|}\int_{\mm_t} H_s(x) d\mu,
\end{equation}
where $d\mu$ denotes the surface measure on $\mathcal M_t$. With this choice of $h(t)$, the set $E_t$ enclosed by $\mm_t$ has constant volume. An  interesting feature of this flow is that the fractional $s$-perimeter of $E_t$ is decreasing, and the monotonicity is strict unless $E_t$ is a sphere.

Fractional (or nonlocal) mean curvature was first defined by Caffarelli, Roquejoffre and Savin in \cite{CRSavin}. It arises naturally when performing the first variation of the fractional perimeter, a nonlocal notion of perimeter introduced in the same paper. We will recall the definitions of these quantities in \S 2. Minimizers of the fractional perimeter are usually called nonlocal minimal sets, and their boundaries nonlocal minimal surfaces. Fractional perimeter and mean curvature have also found  application in other contexts, such as image reconstruction and nonlocal capillarity models, see e.g. \cite{BS2015, MV}. 

Nonlocal minimal surfaces have attracted the interest of many researchers in the last years. One of the main issues is the study of their regularity and the classification of nonlocal minimal cones: many results have been obtained, see \cite{CRSavin, CV, BFV, SV, CSV, CCS}, which exhibit interesting analogies and differences with respect to the classical case. Among
the important differences, we mention in particular the fact
that fractional minimal surfaces can stick at the boundary of (even smooth
and convex) domains, and occupy all the domain for small values
of the fractional parameter, see~\cite{MR3596708}: these features are in sharp contrast
with the classical case and they reveal the important
role of the contributions coming from infinity in the geometric
displacements of nonlocal minimal surfaces.

A related topic of investigation consists in the study of sets which are stationary for the fractional perimeter, i.e. sets having vanishing nonlocal mean curvature. This is a  weaker notion than minimality, and some examples are helicoids and a nonlocal version of catenoids, see e.g. \cite{DDPW, CDDP}. Sets with constant nonlocal mean curvature, such as Delaunay-type surfaces, have been studied in \cite{CFSW, MR3485130, CFW1,CFW2}. In addition, in \cite{CFSW, Ciraolo-etal} it was proved an analogue of the Alexandrov Theorem in the nonlocal setting, which will be crucial for our purposes: any, regular enough, bounded set having constant fractional mean curvature is necessarily a ball.

Before introducing our results, let us recall some properties of the {\em classical} mean curvature flow, where the speed of the hypersurface is given by the usual mean curvature. This flow has been widely studied in the last decades, both for its geometric interest and for its relevance in physical models describing the dynamics of interfaces. The equation satisfied by the immersion is a parabolic PDE, and smooth solutions exist locally; however, they can become singular in finite time due to curvature blowup. For this reason, various notions of weak solutions have been introduced during the years, which allow to continue the evolution after the formation of singularities, see e.g. \cite{CGG,ES1}.

An important feature of classical mean curvature flow is that, roughly speaking, it deforms general hypersurfaces into some canonical profiles, possibly after rescaling near the singularities. Such a behaviour is related to the diffusive character of the flow and is of great interest for geometric applications. The first result on asymptotic convergence was obtained by Huisken in \cite{Hu84} in the $h(t) \equiv 0$ case. He  proved that
{\em convex hypersurfaces remain smooth up to a finite maximal time at which they shrink to a point, and that they converge to a round sphere after rescaling}. Shortly afterwards, in \cite{Hu87}, he obtained an analogous result for the {\em
volume preserving flow}: in this case, the solution exists for all times and converges to a sphere as $ t \to +\infty$. In later years, many researchers have studied the convergence to a sphere for other kinds of geometric flows, with a speed driven by more general functions of the (classical) principal curvatures, see e.g. \cite{AMZ,AWei}. As a possible application of these results, we point out that the convergence to a sphere along a suitable flow can be used to obtain generalizations or alternative proofs of classical geometric inequalities, such as the isoperimetric inequality, or inequalities in convex analysis like the ones by Minkowski or Alexandrov-Fenchel, see e.g. \cite{McCoy2005, Schulze, GLi, ACWei}. 


By contrast, the study of fractional mean curvature flow has started only recently and very few results are known. The existence and uniqueness of weak solutions in the viscosity sense for the flow in the $h(t) \equiv 0$ case has been obtained by various authors with different approaches \cite{Imbert, CSou, CMPonsiglione}. In particular, in \cite{CSou} Caffarelli and Souganidis proved convergence to motion by fractional mean curvature  of a threshold dynamics scheme. More recently, in \cite{CNRuffini},  Chambolle, Novaga and Ruffini, have extended the results in \cite{CSou} to the anisotropic case and to the presence of an external driving force (that is $h(t) \neq 0$) and have proved that the scheme preserves convexity, and, as a consequence, also the limit geometric evolution is convexity preserving. It is commonly expected that a local existence theorem for smooth solutions should hold as well, but no proof of it is available until now to our knowledge. Some qualitative properties of smooth solutions have been analyzed in \cite{SaezV}, while the formation of neckpinch singularities has been studied in \cite{CintiSV}. The occurrence of fattening for the fractional mean curvature flow and its generalizations has been studied in \cite{CesaroniDNV}.

The aim of this paper is to study the
{\em convergence to a sphere of the solutions of the nonlocal flow \eqref{flow} with convex initial data}.
This can be regarded as the first attempt to investigate the asymptotic behaviour of solutions to fractional flows, in a similar spirit to the above mentioned works in the classical case. Our main results are some apriori estimates on smooth solutions, which give a uniform control on the geometry of the evolving surfaces, and establish that the fractional curvature remains uniformly bounded along the flow. As a consequence, we can show that any smooth solution, satisfying suitable regularity assumptions, exists for all times and converges to a sphere. The method is inspired by the one of \cite{Andrews-aniso, Sinestrari-CalcVar-2015} in the classical setting and is based on the monotonicity along the flow of the fractional isoperimetric ratio, i.e., the ratio between suitable powers of the fractional perimeter and the enclosed volume. This monotonicity property is peculiar of the volume preserving case, and so the approach used here does not apply when $h(t) \equiv 0$, although we expect that case to exhibit a similar behaviour, at least if $s$ is suitably close to $1$. On the other hand, we include in this paper the treatment of more general flows in the volume preserving setting, with a {\em
nonlinear speed} of the form $\Phi(H_s)$, with $\Phi(\cdot)$ a positive increasing function satisfying suitable structural assumptions.\medskip

Let us describe our results in more detail. For this,
let us denote by~$\underline{\rho_E}$ and~$\overline{\rho_E}$
the inner radius and the outer radius of a set $E \subset \R^n$, namely
\begin{equation}\label{RADIO}
\underline{\rho_E}:=\sup\{ r>0 :  \exists \, x_o\in\R^n, \ B_r(x_o)\subset E \},
\quad
\overline{\rho_E}:=\inf\{ r>0 :  \exists \,  x_o\in\R^n , B_r(x_o)\supset E \}.
\end{equation}
Then our main estimates can be stated as follows:

\begin{thm}\label{main}
Let $E_0$ be a smooth compact convex set of $\R^n$ and let $\mm_0=\partial E_0$. Let $F:\mm_0 \times [0,T) \to \rl^n$, with $0 <T \leq +\infty$, be a solution of \eqref{flow} of class $C^{2,\beta}$ for some $\beta>s$. Then there exist positive constants $0 <R_1 \leq R_2$, $0 < K_1 \leq K_2$, only depending on $E_0$, such that
$$
R_1 \leq \underline{\rho_{E_t}} \leq \overline{\rho_{E_t}} \leq R_2
$$
$$
K_1 \leq H_s(p,t) \leq K_2 \qquad p \in \mm_0,
$$
for all $t \in [0,T)$.
\end{thm}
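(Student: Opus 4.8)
The plan is to exploit the monotonicity of the fractional isoperimetric ratio along the flow, together with the geometric fact that for convex sets the perimeter and volume control, and are controlled by, the inner and outer radii. Since the flow \eqref{flow} preserves volume and decreases the fractional $s$-perimeter (strictly unless $E_t$ is a ball), the quantity $\mathrm{Per}_s(E_t)$ is bounded above by $\mathrm{Per}_s(E_0)$ and below by the value on a ball of the given volume; equivalently, the fractional isoperimetric ratio is bounded along the flow. First I would record the relevant convex-geometric estimates: for a convex body $E$ one has two-sided bounds relating $\overline{\rho_E}$, $\underline{\rho_E}$, the volume $|E|$ and (here) the fractional perimeter $\mathrm{Per}_s(E)$; in particular $|E|$ and $\mathrm{Per}_s(E)$ are each comparable, up to dimensional constants, to appropriate powers of the radii when combined with convexity. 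Feeding in the volume constraint $|E_t|=|E_0|$ and the perimeter bound gives immediately a uniform upper bound $R_2$ on $\overline{\rho_{E_t}}$ (a long thin convex set of fixed volume would have large perimeter) and a uniform lower bound $R_1$ on $\underline{\rho_{E_t}}$ (a convex set of fixed volume that is very flat in one direction also has large perimeter, or alternatively one uses that the outer radius is bounded and the volume is fixed, so by convexity the inner radius cannot degenerate). This establishes the first chain of inequalities.

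The second step is to convert the radii bounds into the curvature bounds $K_1 \le H_s(p,t) \le K_2$. Here I would use the comparison principle for the fractional mean curvature together with the geometric meaning of $H_s$ as a nonlocal quantity: by the inner ball condition, at every point $p\in\mm_t$ there is a ball of radius $R_1$ contained in $E_t$ touching $\mm_t$ at $F(p,t)$, and by the outer ball condition a ball of radius $R_2$ containing $E_t$ and touching at the same point. Since $H_s$ is monotone under inclusion (a set contained in another, tangent at a common boundary point, has smaller-or-equal fractional mean curvature there), one sandwiches $H_s(p,t)$ between the fractional mean curvature of a sphere of radius $R_2$ and that of a sphere of radius $R_1$. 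The fractional mean curvature of a sphere of radius $R$ scales like $c_{n,s}R^{-s}$ for an explicit constant, so this yields $K_1:= c_{n,s}R_2^{-s} \le H_s(p,t) \le c_{n,s}R_1^{-s} =: K_2$. One should be a little careful that the nonlocal nature of $H_s$ means the tangent-ball comparison still needs the ambient set to be trapped between the two balls, which is exactly what the radii estimates provide; I would state the required monotonicity/comparison lemma for $H_s$ (it follows from the definition as a principal-value integral, or is recalled in \S 2) and apply it directly.

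The main obstacle I anticipate is the first step, specifically obtaining the lower bound on the inner radius $\underline{\rho_{E_t}}$ and, relatedly, making the convex-geometric comparison between $\mathrm{Per}_s$, $|E|$ and the radii fully rigorous and uniform. The fractional perimeter is a genuinely nonlocal functional, so the clean Euclidean inequalities (e.g. the monotonicity of perimeter under inclusion of convex sets, John's theorem relating a convex body to an ellipsoid) need their fractional analogues; one must check, for instance, that $\mathrm{Per}_s$ is monotone under inclusion (true) and that it blows up appropriately for degenerating convex bodies of fixed volume. A secondary subtlety is that $\mathrm{Per}_s(E_0)$ must be finite and the monotonicity $\frac{d}{dt}\mathrm{Per}_s(E_t)\le 0$ must be justified for $C^{2,\beta}$ solutions with $\beta>s$ — this is where the regularity hypothesis in the statement enters, guaranteeing that $H_s$ is well-defined and continuous so that the first-variation computation is valid. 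Once these nonlocal replacements for the classical convex-geometry toolkit are in place, the argument proceeds exactly as sketched, with all constants depending only on $n$, $s$, and $E_0$ through $|E_0|$ and $\mathrm{Per}_s(E_0)$.
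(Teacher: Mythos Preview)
Your first step, bounding the inner and outer radii via the monotonicity of the fractional isoperimetric ratio and convex-geometric inequalities, is correct and is essentially the approach of the paper (their Proposition~\ref{bounds} and Corollary~\ref{spittelmarkt}).

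The second step, however, contains a genuine gap. You assert that the inner-radius bound gives, \emph{at every boundary point}, an interior tangent ball of radius $R_1$, and similarly for the outer radius. This is false. The inner radius only guarantees the existence of \emph{some} ball $B_{R_1}(x_0)\subset E_t$; it does not give a uniform interior ball condition at each boundary point. For a concrete obstruction, take an ellipse with semi-axes $a\gg b$: the inner radius is $b$, but at the tip of the major axis the largest interior tangent disk has radius $b^2/a\ll b$. The analogous failure occurs for the outer ball: a ball of radius $R_2$ (or even $2R_2$) tangent at a given boundary point need not contain $E_t$. Thus the sandwich $c_{n,s}R_2^{-s}\le H_s\le c_{n,s}R_1^{-s}$ is not available from the radii bounds alone.

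For the lower bound on $H_s$ this is reparable by a different elementary argument: convexity places $E_t$ in the half-space below the tangent hyperplane at $p$, and the diameter bound confines it to $B_{2R_2}(p)$; comparing with the half-ball yields $H_s(p)\ge s(1-s)\int_{|z|\ge 2R_2}|z|^{-n-s}\,dz>0$ (this is exactly the paper's computation in Corollary~\ref{spittelmarkt}). For the upper bound, however, no such simple fix exists, and this is precisely the hard part of Theorem~\ref{main}. The paper obtains it by a parabolic maximum-principle argument of Tso type: one introduces the auxiliary quantity $W=H_s/(u-\alpha)$, with $u$ the support function relative to a fixed interior point, computes its evolution equation, and uses a key integral inequality (Proposition~\ref{hausvogteiplatz}) bounding $H_s^2$ by the nonlocal analogue of $|A|^2$ to produce a differential inequality $\partial_t W<C_1W^2-C_2W^3$ at the spatial maximum. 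The cubic term forces $W$, and hence $H_s$, to stay bounded. Your proposal misses this entire mechanism.
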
 

As mentioned above, in \cite{CNRuffini} it is proven that the nonlocal mean curvature flow with forcing term ($h(t)\not\equiv 0$) preserves convexity. As a consequence, we know that solutions of problem \eqref{flow} starting from a convex initial datum, stay convex for all times.

The proof of Theorem~\ref{main} relies on a series
of delicate estimates based on a nonlocal analysis
of geometric flavor, which turns out to be significantly different
with respect to the classical case.

Let us describe some intermediate steps in the proof of Theorem \ref{main}, which we believe to have an interest on their own. One of these results, Proposition \ref{bounds},  shows that a bound on the fractional isoperimetric ratio of a convex set implies a bound on the ratio between the outer and inner radii. A similar result was known in the classical case, but the proof in the nonlocal setting is quite different. Another crucial step of our argument is provided by Proposition \ref{hausvogteiplatz}, where we estimate  the fractional mean curvature in terms of another nonlocal quantity, which has some formal analogy with the norm of the second fundamental form in the classical case. However, since there is no fractional analogue of the second fundamental form, as shown in \cite{Abatangelo}, there is no obvious relation as in the classical case. By suitable estimates of the surface integrals involved, we obtain an inequality which suffices for the purposes of this paper; on the other hand, it would be interesting to investigate further these topics and to derive sharper inequalities in the future.

Theorem~\ref{main} easily implies that a solution of \eqref{flow} exists for all times and converges to a sphere as $t \to +\infty$, provided it satisfies suitable regularity and continuation properties. Roughly speaking, we need to know that the solution remains smooth and does not develop singularities as long as the fractional curvature is bounded. More precisely, we assume that there exists a smooth solution of \eqref{flow} satisfying the following property for some $\beta>s$: \medskip

{\bf (R) }
If $H_s$ is bounded on $\mm_t$ for all $t \in [0,T_0)$ for some $T_0 \leq T$, where $T$ is the maximal time of existence, then the $C^{2,\beta}$-norm of $\mm_t$, up to translations, is also bounded for $t \in [0,T)$ by a constant only depending on the supremum of $H_s$. In addition, either $T_0=T=+\infty$, or $T_0<T$.
\medskip

By ``up to translations'', we mean that $\mm_t$ is not assumed to remain in a bounded set of $\R^n$, and that the $C^{2,\beta}$ bound applies after possibly composing the flow with a suitable, time dependent, translation (e.g., the one fixing the barycenter). We give below more comments on the possibility of this behaviour. For solutions satisfying {\bf (R) }, 
the following result holds.

\begin{thm}\label{sphere}
Let $E_0$ be a smooth compact convex set of $\R^n$ and let $\mm_0=\partial E_0$. Let $F:\mm_0 \times [0,T) \to \rl^n$, with $0 <T \leq +\infty$, be a  solution of \eqref{flow} of class $C^{2,\beta}$ for some $\beta>s$ which satisfies property {\bf (R) }. Then $T=+\infty$, and $\mm_t$  converges to a round sphere as $t \to +\infty$ in $C^{2,\beta}$ norm, possibly up to translations.
\end{thm}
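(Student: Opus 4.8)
The plan is to feed the uniform estimates of Theorem~\ref{main} into the regularity hypothesis \textbf{(R)} to obtain global existence and uniform $C^{2,\beta}$ control, and then to identify the limit as a round sphere through the monotonicity of the fractional perimeter together with the nonlocal Alexandrov theorem. By Theorem~\ref{main}, along the given solution one has $K_1 \le H_s \le K_2$ on $\mm_t$ for every $t \in [0,T)$, with $K_1,K_2$ depending only on $E_0$; in particular $h(t)$ defined by \eqref{volpr} is bounded as well, and $R_1 \le \underline{\rho_{E_t}} \le \overline{\rho_{E_t}} \le R_2$. Applying \textbf{(R)} with the choice $T_0=T$, the boundedness of $H_s$ on $[0,T)$ rules out the alternative $T_0<T$, so $T_0=T=+\infty$, and moreover the $C^{2,\beta}$ norm of $\mm_t$ is bounded uniformly in $t$ by a constant depending only on $E_0$, up to a time-dependent translation, which from now on we fix by requiring the barycenter of $E_t$ to lie at the origin. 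By the convexity-preserving property of \eqref{flow} established in \cite{CNRuffini}, each $E_t$ is convex, and, combined with the radii bounds, the (translated) sets $E_t$ all lie in a fixed ball of $\R^n$.

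Next I would exploit the variational structure. As recalled in the Introduction, $P_s(E_t)$ is non-increasing along \eqref{flow}, and the first variation formula, combined with the definition \eqref{volpr} of $h(t)$, gives
\[
\frac{d}{dt}\,P_s(E_t)\;=\;-\int_{\mm_t}\bigl(H_s-h(t)\bigr)^2\,d\mu\;\le\;0 .
\]
Since $P_s(E_t)\ge P_s(B_{R_\infty})$ by the fractional isoperimetric inequality, where $B_{R_\infty}$ is the ball with $|B_{R_\infty}|=|E_0|$, the perimeter $P_s(E_t)$ decreases to a limit $\ell\ge P_s(B_{R_\infty})$, and integrating in time yields $\int_0^{\infty}\!\int_{\mm_t}(H_s-h(t))^2\,d\mu\,dt<+\infty$; hence there is a sequence $t_k\to+\infty$ with $\int_{\mm_{t_k}}(H_s-h(t_k))^2\,d\mu\to0$. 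Fixing $\beta'\in(s,\beta)$, the uniform $C^{2,\beta}$ bound and Arzel\`a--Ascoli allow us to pass to a subsequence along which $\mm_{t_k}\to\mm_\infty=\partial E_\infty$ in $C^{2,\beta'}$, with $E_\infty$ convex, $|E_\infty|=|E_0|$ and nondegenerate inner and outer radii. Because $\beta'>s$, the fractional mean curvature and its average depend continuously on the surface in the $C^{2,\beta'}$ topology, so passing to the limit in the vanishing $L^2$-deviation forces $H_s\equiv\mathrm{const}$ on $\mm_\infty$; by the nonlocal Alexandrov theorem of \cite{CFSW, Ciraolo-etal}, $E_\infty$ is a ball, hence, by our normalization and the volume constraint, $E_\infty=B_{R_\infty}$, and continuity of $P_s$ along the convergent subsequence gives $\ell=P_s(B_{R_\infty})$.

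It remains to promote this to convergence of the whole flow. I would consider the $\omega$-limit set $\omega$ of $\{\mm_t\}_{t\ge0}$ in the $C^{2,\beta'}$ topology, modulo translations: the uniform $C^{2,\beta}$ bound makes the trajectory precompact, and the boundedness of the normal speed $-H_s+h$ makes $t\mapsto\mm_t$ continuous, so $\omega$ is nonempty, compact and connected. Any $\partial E_*\in\omega$ satisfies $|E_*|=|E_0|$ and, since $P_s(E_t)\downarrow\ell=P_s(B_{R_\infty})$ and $P_s$ is continuous under $C^1$ convergence of the boundaries at fixed volume, $P_s(E_*)=P_s(B_{R_\infty})$; by the characterization of equality in the fractional isoperimetric inequality, $E_*$ is a translate of $B_{R_\infty}$. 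Thus $\omega$, modulo translations, reduces to the single point $\partial B_{R_\infty}$, and therefore $\mm_t\to\partial B_{R_\infty}$ in $C^{2,\beta'}$, up to translations, as $t\to+\infty$, for every $\beta'<\beta$. (Alternatively, one may reach the same conclusion from the quantitative fractional isoperimetric inequality, which turns $P_s(E_t)\downarrow P_s(B_{R_\infty})$ into decay of the Fraenkel asymmetry of $E_t$.)

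The main obstacle, apart from the bookkeeping needed to pass to the limit in the nonlocal curvature in the previous step, which is precisely where the assumption $\beta>s$ enters, is to upgrade convergence in $C^{2,\beta'}$ for all $\beta'<\beta$ to convergence in the full $C^{2,\beta}$ norm claimed in the statement. This cannot be obtained by interpolation, since no bound stronger than $C^{2,\beta}$ is available; instead one would write $\mm_t$, once it is a sufficiently small graph over $\partial B_{R_\infty}$, as the graph of a function $u(\cdot,t)$ on the sphere satisfying a fractional parabolic equation whose linearization at $u\equiv0$ is a fractional operator with a spectral gap away from the finite-dimensional kernel generated by translations, and then invoke the corresponding smoothing and stability estimates to obtain the $C^{2,\beta}$ (in fact exponential) convergence, up to translations.
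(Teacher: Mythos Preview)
Your argument is correct and reaches the same conclusion as the paper, but the route to ``every subsequential limit is a ball'' is genuinely different. The paper first proves, as a separate proposition, that $\max_{\mm_t}|H_s-h(t)|\to 0$ as $t\to+\infty$: the uniform $C^{2,\beta}$ bound makes $H_s$ uniformly Lipschitz (via Theorem~\ref{dH}), so if $|H_s-h|\ge\varepsilon$ at some $(\bar p,\bar t)$ then this persists on a space-time ball of fixed size, forcing a definite drop in ${\rm Per}_s$ that cannot recur infinitely often. With this in hand, \emph{every} subsequential limit has constant $H_s$, and the nonlocal Alexandrov theorem plus the volume constraint pin down the limit. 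You instead extract \emph{one} good sequence from the integrability of $\|H_s-h\|_{L^2(\mm_t)}^2$ in time, identify that limit as $B_{R_\infty}$ via Alexandrov, read off $\ell=P_s(B_{R_\infty})$, and then appeal to the equality case of the fractional isoperimetric inequality to force every $\omega$-limit point to be a ball. Your approach avoids the Lipschitz/contradiction argument at the price of invoking the rigidity of the fractional isoperimetric inequality (available e.g.\ from \cite{FFMMM}); the paper's approach is more self-contained and in fact yields the stronger information that $H_s$ becomes asymptotically constant in sup norm, which is of independent interest.

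Your final paragraph about upgrading $C^{2,\beta'}$ convergence to $C^{2,\beta}$ is a fair point: the paper's proof also passes through precompactness in $C^{2,\beta'}$ for $\beta'<\beta$ and does not explicitly close this gap, so you are being more scrupulous than the original here.
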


We remark that the results of this paper apply to smooth solutions of the flow, whose existence has to be assumed apriori, because the existence theorems available in the current literature only concern viscosity solutions. In this respect, this paper should be regarded as a part of a broader investigation of fractional curvature flows, which should be complemented in the future with the analysis of the local existence of smooth solutions. Regarding assumption {\bf (R)}\,, we observe that it is a natural analogue of some
properties which are well known in the classical case, see e.g. \cite[\S 7-8]{Hu84}, and are consequence of the standard parabolic theory. In view of some recent regularity results on elliptic nonlocal problems, see e.g. \cite{BFV,CL-D1, CL-D2}, we expect that similar results can be obtained for the fractional mean curvature. We plan to investigate these issues in forthcoming work.

As observed above, in Theorem~\ref{sphere} the
convergence to a sphere is in principle only ``up to translations'',
in the sense that the limit set, which is geometrically a sphere, could
keep translating indefinitely.
In the classical case, the possibility of the additional translation is ruled out either as a consequence
of additional estimates on the convergence rate, see e.g. \cite{BP}, 
or by maximum principle techniques based on reflection methods, \cite{MR1386736,McCoy2004,AWei}.
We think that it would be interesting to
understand whether these methods can be extended
to the nonlocal setting.
\medskip

The paper is organized as follows:
\begin{itemize}
\item In Section \ref{S2}, we give some preliminaries and we recall the evolution laws of some geometric quantities associated to $\mathcal M_t$;
\item Section \ref{SE:3} contains our apriori estimates on the inner and outer radii of convex solutions and a lower bound for $H_s$;
\item Section \ref{S4} deals with some integral estimates which allow us
to bound the fractional mean curvature with the nonlocal analogue of the norm of the nonlocal second fundamental form;
\item In Section \ref{S5} we prove our key result which gives an upper bound on the fractional mean curvature;
\item In Section \ref{S6}, we treat the more general case of a flow whose speed is of the form $\Phi(H_s)$, proving an upper bound on the fractional mean curvature;
\item Finally, in Section \ref{S7}, we prove convergence to a sphere in both the standard and the general case.
\end{itemize}

\section{Preliminaries}\label{S2}

Consider a set $E \subset \rl^n$, with boundary $\mm:=\partial E$, and let $s \in (0,1)$. 
Given $x \in \mm$, the {\em fractional mean curvature} of order $s$ of $E$ (equivalently, of $\mm$) at  $x$ is defined by
\begin{equation}\label{defspace}
H_s(x)=s(1-s) \lim_{\ep \to 0^+}\int_{\rl^n \setminus B_\ep(x)} \frac{\tilde \chi_E(y)}{|x-y|^{n+s}} dy ,
\end{equation}
where
$$
\tilde \chi_E(y)= \left \{ \begin{array}{ll}
1 \qquad & \mbox{ if }y \in E^c \medskip \\
-1 \qquad & \mbox{ if }y \in E.
\end{array}
\right.
$$
If $\mm$ is smooth, then the fractional mean curvature is well defined at each point and is a regular function. In fact, the following
result is known, see \cite[Proposition 6.3]{FFMMM} and \cite[Proposition 2.1]{CFSW}.
\begin{thm}\label{dH}
Suppose $\partial E$ is of class $C^{1,\beta}$, with $\beta>s$. Then the right-hand side of \eqref{defspace} is well defined and finite for all $x \in \partial E$ and defines a continuous function on $\partial E$.  If in addition $\partial E$ is of class $C^{2,\beta}$, with $\beta>s$, then $H_s \in C^1(\partial E)$ and its derivative in a tangential direction $v \in T_x \mm$ is given by
\begin{equation}\label{tang1}
\frac{\partial H_s}{\partial v}(x) = s(1-s)(n+s) \lim_{\ep \to 0^+}\int_{\rl^n \setminus B_\ep(x)} \tilde \chi_E(y) \frac{ \langle y-x, v \rangle}{|x-y|^{n+s+2}} dy.
\end{equation}
\end{thm}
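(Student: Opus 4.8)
The plan is to treat the two statements — well-definedness and continuity of $H_s$ under $C^{1,\beta}$ regularity, and the differentiability formula \eqref{tang1} under $C^{2,\beta}$ regularity — by the same local analysis, namely flattening $\partial E$ near a point and carefully splitting the principal value integral into a singular near-field piece and a harmless far-field piece. First I would fix $x \in \partial E$ and, after a rigid motion, assume $x = 0$ with $\nu(0) = e_n$; then the $C^{1,\beta}$ hypothesis gives a radius $r_0$ and a function $u \in C^{1,\beta}(B'_{r_0} \subset \R^{n-1})$ with $u(0) = 0$, $\nabla u(0) = 0$, $|u(y')| \le C|y'|^{1+\beta}$, such that $E \cap B_{r_0}$ is the subgraph $\{(y',y_n) : y_n < u(y')\}$. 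Split the domain of integration in \eqref{defspace} as $(\R^n \setminus B_\ep) = (B_{r_0} \setminus B_\ep) \cup (\R^n \setminus B_{r_0})$. The far piece is trivially an absolutely convergent integral (the integrand is bounded by $|x-y|^{-n-s}$ on a set bounded away from $x$), and it depends continuously on $x$ by dominated convergence once we know $\partial E$ varies continuously; so all the work is in the near piece.

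For the near piece, the point is the usual cancellation: because $\tilde\chi_E$ equals $+1$ above the graph and $-1$ below, and because the graph is, to leading order, the hyperplane $\{y_n = 0\}$ which is odd about $x = 0$, the ``flat'' contribution $\int_{B_{r_0}\setminus B_\ep} \tilde\chi_{\{y_n<0\}}(y)|y|^{-n-s}\,dy$ vanishes identically by symmetry. Thus the near piece equals $\int_{B_{r_0}\setminus B_\ep}\big(\tilde\chi_E(y) - \tilde\chi_{\{y_n<0\}}(y)\big)|y|^{-n-s}\,dy$, and the new integrand is supported on the symmetric difference $\{y : \min(0,u(y')) < y_n < \max(0,u(y'))\}$, which has ``height'' $|u(y')| \le C|y'|^{1+\beta}$. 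Estimating, the absolute value of this integrand integrates against $|y|^{-n-s}$ to something controlled by $\int_{B'_{r_0}} |y'|^{1+\beta} |y'|^{-n-s}\,dy' = \int_{B'_{r_0}} |y'|^{1+\beta - n - s}\,dy'$, which converges near $y' = 0$ precisely because $\beta > s$ (the exponent $1 + \beta - n - s > -(n-1)$). Hence the $\ep \to 0^+$ limit exists, is finite, and — since everything is dominated uniformly for $x$ ranging in a small neighborhood on $\partial E$, using that the local graph functions and their $C^{1,\beta}$ norms vary continuously — the resulting function is continuous on $\partial E$. This is the heart of the argument and also, I expect, the main technical obstacle: one must organize the comparison with the tangent hyperplane so that the odd symmetry is genuinely exploited, and must check that the error terms are uniformly dominated as the base point moves, which requires a little care with the changing local coordinate frames.

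For the second statement, assuming $\partial E \in C^{2,\beta}$, I would differentiate \eqref{defspace} in a tangential direction $v$. Formally, translating $x$ along $\partial E$ and differentiating the integrand $\tilde\chi_E(y)|x-y|^{-n-s}$ in $x$ produces the kernel $(n+s)\langle y-x,v\rangle |x-y|^{-n-s-2}$ (the derivative of $\tilde\chi_E$ in the moving domain contributes a boundary term that, integrated over $\partial E$ against the odd kernel, cancels — or more cleanly, one differentiates a difference quotient and passes to the limit). To justify this rigorously, I would again flatten near $x$, write the difference quotient $\frac{1}{\eta}(H_s(x + \eta v + o(\eta)) - H_s(x))$, and use the $C^{2,\beta}$ bound $|u(y') - \tfrac12 \langle D^2u(0)y',y'\rangle| \le C|y'|^{2+\beta}$ to get the analogous domination: the candidate kernel in \eqref{tang1} is odd of one higher degree, $|\langle y-x,v\rangle||x-y|^{-n-s-2} \sim |y|^{-n-s-1}$, so after the symmetry cancellation against the tangent plane (now the relevant height is governed by the $C^{2,\beta}$ remainder $|y'|^{2+\beta}$, while the quadratic part $\tfrac12\langle D^2u(0)y',y'\rangle$ is even in $y'$ and contributes nothing against the odd kernel) the error integrates $|y'|^{2+\beta}\cdot|y'|^{-n-s-1} = |y'|^{1+\beta-n-s}$, convergent again exactly because $\beta > s$. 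Dominated convergence then gives both the existence of the limit in \eqref{tang1} and the identity $\partial_v H_s(x) = s(1-s)(n+s)\,\mathrm{PV}\!\int \tilde\chi_E(y)\langle y-x,v\rangle|x-y|^{-n-s-2}\,dy$; continuity of this in $x$ (hence $H_s \in C^1$) follows from the same uniform-domination considerations as before. I would remark that this is essentially the content of \cite[Proposition 6.3]{FFMMM} and \cite[Proposition 2.1]{CFSW}, so in the paper itself it suffices to cite those references; the sketch above indicates why the threshold $\beta > s$ is the natural one.
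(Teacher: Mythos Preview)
The paper does not give its own proof of this theorem: it is stated as a known result, with the sentence ``the following result is known, see \cite[Proposition 6.3]{FFMMM} and \cite[Proposition 2.1]{CFSW}'' immediately preceding the statement, and no argument follows. You correctly anticipate this at the end of your proposal, and your sketch is a sound reconstruction of the standard flattening-and-cancellation argument underlying those references, with the threshold $\beta>s$ appearing exactly where it should.
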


By using the divergence theorem and estimating the boundary terms on $\partial B_\ep(x)$ with techniques similar to the proof of \cite[Proposition 2.1]{CFSW}, we can prove that, under the hypotheses of the previous theorem, $H_s$ and its gradient can be written as boundary integrals on $\mm$, as follows:
\begin{equation}\label{defsurface}
H_s(x) = 2(1-s) \lim_{\ep \to 0^+}\int_{\mm \setminus B_\ep(x)} \frac{\langle y-x,\nu (y) \rangle}{|x-y|^{n+s}} d\mu(y),
\end{equation}
\begin{equation}\label{tang2}
\frac{\partial H_s}{\partial v}(x) = 2s(1-s) \lim_{\ep \to 0^+}\int_{\mm \setminus B_\ep(x)} \frac{\nu(y) \cdot v}{|x-y|^{n+s}} d\mu(y).
\end{equation}


We also recall that the {\em fractional perimeter} of $E$, as introduced in \cite{CRSavin}, is defined as
$$
{\rm Per}_s(E)= s(1-s) \int_{E} \int_{E^c} \frac{dx \, dy}{|x-y|^{n+s}}.
$$
Then fractional mean curvature arises as the first variation of the fractional perimeter along a deformation of $E$, see \eqref{eqper} later.

We state a general criterion for the convergence of singular integrals on the boundary of a smooth compact set $E$. Suppose that $\partial E$ is of class $C^{1,\beta}$, for some $\beta >s$, and that $f \in C^2(\partial E)$. Then, for any given $x \in \partial E$, the quantity
$$
\lim_{\ep \to 0^+}\int_{\mm \setminus B_\ep(x)} \frac{f(y)-f(x)}{|x-y|^{n+s}} d\mu(y)
$$
exists and is finite.This can be proved by standard arguments. Roughly speaking, the contribution of the first order approximation of $f(y)-f(x)$ around $x$ cancels by symmetry reasons. The remaining terms are of order $O(|y-x|)^{1+\beta}$, by the smoothness of $\partial E$ and of $f$, and this ensures convergence of the integral. In the following, for simplicity of notation, we will write singular integrals as the ones above as if they were ordinary integrals, with the implicit meaning that they are taken in the principal value sense. 

We now recall some notation and general results about geometric evolutions of sets and hypersurfaces. Let us  consider a time-dependent family of sets $E_t$ evolving smoothly from a given initial set $E_0$. We can consider the corresponding evolution of the boundaries, and study the map $F:\mm_0 \times [0,T) \to \rl^n$, where $\mm_0=\partial E_0$ and $\mm_t:=\partial E_t$. Let us denote by $V(p,t):=\langle \partial_t F(p,t), \nu(p,t) \rangle$ the normal component of the speed of our flow.

We first recall the properties of the evolution of the classical geometric quantities associated to the hypersurfaces $\mm_t$. As in \cite{Hu84}, we denote by $g_{ij}$ the components of the metric tensor in a given coordinate system, by $g^{ij}$ its inverse, by $h_{ij}$ the second fundamental form, by $H=h_{ij}g^{ij}$ the mean curvature and by $|A|^2=h_{ij}g^{jl}h_{lk}g^{ki}$ the squared norm of the second fundamental form. If $\ld_1 \leq \dots \leq \ld_{n-1}$ denote the principal curvatures at a given point, then $H=\ld_1+\dots+\ld_{n-1}$, while $|A|^2=\ld_1^2+\dots+\ld_{n-1}^2$. We also denote by  $\nabla^{\mm_t}$, $\Delta^{\mm_t}$ respectively the tangential gradient and the Laplace-Beltrami operator defined on $\mm_t$.

We denote by $p,q,\dots$ the points on $\mm_0$ and by $x,y,\cdots$ the points on $\mm_t$ for positive $t$, as well as the general points in $\rl^n$. For simplicity of notation, when considering the speed $V$ on $\mm_t$ for a fixed $t$, we will usually write $V(x)$ with $x \in \mm_t$ instead of $V(p,t)$, with $x=F(p,t)$. We will use similar conventions for all other quantities defined on the evolving hypersurfaces.
We also denote by $d\mu$ the surface measure along ${\mm_t}$.
In this notation, we recall \cite[Theorem 3.2 and
Lemmata 7.4, 7.5 and 7.6]{Huisken-Polden} and we have:

\begin{lem}\label{evol-cl}
The geometric quantities associated to $\mm_t$ satisfy the following equations: \\
{\bf (i)} $\partial_t g_{ij} = 2 V h_{ij} $, \quad $\partial_t g^{ij} = -2 V h^{ij}, $ \\
{\bf (ii)} $\partial_t \, d\mu= V H d\mu, $ \\
{\bf (iii)} $\partial_t \, \nu= - \nabla^{\mm_t} V, $ \\
{\bf (iv)} $\partial_t h_{ij} = - \nabla^{\mm_t}_i \nabla^{\mm_t}_j V + h_{ik}g^{km}h_{mj} V,$ \\
{\bf (v)} $\partial_t H =  -\Delta^{\mm_t} V-|A|^2V$, \medskip \\
{\bf  (vi)} $\dfrac{d}{dt}|E_t| = \displaystyle \int_{\mm_t} V(x) d\mu,$ \quad $\dfrac{d}{dt} |M_t| = \displaystyle \int_{\mm_t} V(x) H(x) d\mu.$
\end{lem}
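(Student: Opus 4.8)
The plan is to derive all the identities by a direct computation in local coordinates, exactly as in \cite{Huisken-Polden}; the only inputs are the standard structure equations of a hypersurface. Since the flow \eqref{flow}, and more generally all the flows treated in this paper, have purely normal velocity, we may work with $\partial_t F = V\nu$ where $V=\langle\partial_t F,\nu\rangle$ (a tangential component of the speed would only amount to a time-dependent reparametrization of $\mm_0$ under which every quantity in the statement transforms naturally, so it does not affect the identities). Fix coordinates $(u^1,\dots,u^{n-1})$ on $\mm_0$, write $\partial_iF$ for the coordinate tangent fields, and recall the Gauss--Weingarten relations $\partial_i\partial_jF=\Gamma_{ij}^k\partial_kF-h_{ij}\nu$ and $\partial_i\nu=g^{km}h_{mi}\,\partial_kF$, together with $\langle\nu,\nu\rangle=1$, $\langle\nu,\partial_iF\rangle=0$.

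I would then proceed in the order the identities are stated. For (i), differentiate $g_{ij}=\langle\partial_iF,\partial_jF\rangle$ in $t$, commute $\partial_t$ with $\partial_i$, insert $\partial_t F=V\nu$ and expand $\partial_i(V\nu)=(\partial_iV)\nu+Vg^{km}h_{mi}\partial_kF$; the normal terms drop against $\partial_jF$ and one gets $\partial_tg_{ij}=2Vh_{ij}$, after which $\partial_tg^{ij}=-2Vh^{ij}$ follows by differentiating $g^{ik}g_{kj}=\delta^i_j$. For (ii), from $d\mu=\sqrt{\det g}\,du$ and $\partial_t\log\det g=g^{ij}\partial_tg_{ij}$ one gets $\partial_td\mu=\tfrac12 g^{ij}(2Vh_{ij})\,d\mu=VH\,d\mu$. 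For (iii), $\langle\partial_t\nu,\nu\rangle=0$ shows $\partial_t\nu$ is tangential, and differentiating $\langle\nu,\partial_iF\rangle=0$ gives $\langle\partial_t\nu,\partial_iF\rangle=-\partial_iV$, hence $\partial_t\nu=-g^{ij}(\partial_iV)\partial_jF=-\nabla^{\mm_t}V$.

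The computation of (iv) is the step I expect to demand the most care, since it mixes second covariant derivatives with the curvature terms and is sensitive to the sign conventions for $h_{ij}$ and $\nu$. Starting from $h_{ij}=-\langle\partial_i\partial_jF,\nu\rangle$, differentiate in $t$ and commute derivatives to get $\partial_th_{ij}=-\langle\partial_i\partial_j(V\nu),\nu\rangle-\langle\partial_i\partial_jF,\partial_t\nu\rangle$. Expanding $\partial_i\partial_j(V\nu)$ and using $\langle\partial_i\partial_j\nu,\nu\rangle=-\langle\partial_i\nu,\partial_j\nu\rangle=-h_{ik}g^{km}h_{mj}$ gives $\langle\partial_i\partial_j(V\nu),\nu\rangle=\partial_i\partial_jV-Vh_{ik}g^{km}h_{mj}$, while the tangentiality of $\partial_t\nu$ together with the Gauss relation turns the last term into $\Gamma_{ij}^k\partial_kV$; recognizing $\partial_i\partial_jV-\Gamma_{ij}^k\partial_kV=\nabla^{\mm_t}_i\nabla^{\mm_t}_jV$ yields (iv). Then (v) is purely algebraic: differentiate $H=g^{ij}h_{ij}$, substitute (i) and (iv), and trace, using $g^{ij}\nabla^{\mm_t}_i\nabla^{\mm_t}_jV=\Delta^{\mm_t}V$ and $g^{ij}h_{ik}g^{km}h_{mj}=h^{ij}h_{ij}=|A|^2$, so that the two $|A|^2V$ terms combine to $-|A|^2V$.

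Finally, for (vi) the second identity is immediate by differentiating $|\mm_t|=\int_{\mm_0}d\mu$ under the integral sign and applying (ii). For the first, I would invoke the first variation of the Lebesgue measure of the moving domain: extending $V\nu$ to an ambient field and using the transport/divergence theorem gives $\tfrac{d}{dt}|E_t|=\int_{\mm_t}\langle\partial_tF,\nu\rangle\,d\mu=\int_{\mm_t}V\,d\mu$. All of these are classical and recorded in \cite{Huisken-Polden}; no analytic obstacle arises, as everything takes place on the smooth hypersurfaces $\mm_t$, and the only real hazard is maintaining consistent sign conventions through (iv)--(v).
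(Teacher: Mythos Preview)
Your proposal is correct and is precisely the standard direct-computation approach. The paper does not supply its own proof of this lemma but simply recalls the identities from \cite[Theorem~3.2 and Lemmata~7.4--7.6]{Huisken-Polden}, so your argument is in fact more detailed than what appears in the paper and coincides with the derivation given in that reference.
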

 
Next we recall the evolution of some nonlocal quantities, see \cite{CRSavin},
\cite[Appendix B, Proposition B.2]{DDPW} and
\cite[Theorem 14]{SaezV}.

\begin{lem}\label{evol-noncl}
{\bf (i)} The fractional perimeter evolves according to
\begin{equation}\label{eqper}
\dfrac{d}{dt} {\rm Per}_s(E_t) = \int_{\mm_t} H_s(x) V(x) d\mu. 
\end{equation}
{\bf (ii)} The fractional mean curvature satisfies the equation
\begin{equation}\label{eqH}
\frac{\partial_t H_s}{2s(1-s)}
 =  - \int_{\mm_t} \frac{V(y)-V(x)}{|y-x|^{n+s}} d\mu(y) - V(x) 
\int_{\mm_t} \frac{1- \nu(y) \cdot \nu(x)}{|y-x|^{n+s}} d\mu(y).
\end{equation}
\end{lem}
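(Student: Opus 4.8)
The plan is to obtain both identities by differentiating the integral representations of Section~\ref{S2} under the integral sign. Since ${\rm Per}_s(E_t)$ and $H_s$ are geometric --- invariant under reparametrizations of $\mm_0$ --- one may reduce at once to a purely normal motion $\partial_t F(p,t)=V(p,t)\,\nu(p,t)$, a tangential component of the velocity affecting neither side of \eqref{eqper} nor of \eqref{eqH}; the singular integrals are cut off on $\{|x-y|>\ep\}$, and the limit $\ep\to0$ is taken only at the end. For part (i), one starts from ${\rm Per}_s(E_t)=s(1-s)\int_{E_t}\!\big(\int_{E_t^c}|x-y|^{-n-s}\,dy\big)dx$, with $E_t^c=\rl^n\setminus E_t$, and applies to its $\ep$-truncation the Hadamard--Reynolds transport formula: on $\{|x-y|>\ep\}$ the integrand is bounded and the domains $E_t$, $E_t^c$ sweep $\mm_t$ with normal velocities $V$ and $-V$. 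This produces a term on $\mm_t$ carrying the velocity $V$, a term on $\{|x-y|=\ep\}$, and --- after exchanging the order of integration --- the quantity $\int_{\mm_t}V(x)\big(\int_{E_t^c\setminus B_\ep(x)}|x-y|^{-n-s}dy-\int_{E_t\setminus B_\ep(x)}|x-y|^{-n-s}dy\big)d\mu(x)$. Letting $\ep\to0$, the contributions on $\{|x-y|=\ep\}$ cancel by the evenness of the kernel, while the bracket converges to $H_s(x)/[s(1-s)]$ by \eqref{defspace}; this is \eqref{eqper}. (Alternatively, \eqref{eqper} can just be quoted from \cite{CRSavin}, where $H_s$ is introduced precisely as this first variation of ${\rm Per}_s$.)

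For part (ii), I would differentiate the \emph{volume} representation \eqref{defspace}, rather than the surface one, which keeps the computation short. Set $H_s^\ep(F(p,t),t)=s(1-s)\int_{\rl^n\setminus B_\ep(F(p,t))}\tilde\chi_{E_t}(y)\,|F(p,t)-y|^{-n-s}\,dy$, so $H_s=\lim_{\ep\to0}H_s^\ep$. Its time derivative splits into three pieces: (a) the variation of $F(p,t)$ inside the kernel, which by $\partial_t F=V(x)\nu(x)$ equals $(n+s)\,s(1-s)\,V(x)\int_{\rl^n\setminus B_\ep(x)}\tilde\chi_{E_t}(y)\frac{\langle y-x,\nu(x)\rangle}{|x-y|^{n+s+2}}\,dy$; (b) the motion of the excluded ball $B_\ep(F(p,t))$, a surface integral on $\partial B_\ep(x)$; (c) the evolution of $\tilde\chi_{E_t}$, whose distributional $t$-derivative along a normal flow is $-2V$ times the surface measure of $\mm_t$, giving $-2s(1-s)\int_{\mm_t\setminus B_\ep(x)}\frac{V(y)}{|x-y|^{n+s}}\,d\mu(y)$. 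Now one uses $\frac{y-x}{|x-y|^{n+s+2}}=-\frac1{n+s}\nabla_y|x-y|^{-n-s}$ and the divergence theorem --- exactly the manipulation turning \eqref{defspace} into \eqref{defsurface} --- to rewrite (a) as $2s(1-s)\,V(x)\int_{\mm_t\setminus B_\ep(x)}\frac{\nu(x)\cdot\nu(y)}{|x-y|^{n+s}}\,d\mu(y)$ plus a term over $\partial B_\ep(x)$ which cancels (b) identically. What remains is $2s(1-s)\int_{\mm_t\setminus B_\ep(x)}\frac{V(x)\,\nu(x)\cdot\nu(y)-V(y)}{|x-y|^{n+s}}\,d\mu(y)$; writing $V(x)\nu(x)\cdot\nu(y)-V(y)=-(V(y)-V(x))-V(x)\big(1-\nu(x)\cdot\nu(y)\big)$ and letting $\ep\to0$ gives \eqref{eqH}. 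Here the first resulting integral converges as a principal value and the second absolutely, because $1-\nu(x)\cdot\nu(y)=O(|x-y|^2)$ along $\mm_t$; the hypothesis $\beta>s$ enters precisely in the control of that principal value (and already through Theorem~\ref{dH} in \eqref{defsurface}--\eqref{tang2}), and it is also what lets $\lim_{\ep\to0}$ commute with $\partial_t$.

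The step I expect to be the genuine difficulty is the $\ep$-bookkeeping: several of the individual terms above --- the term (a) and the term (c) in particular --- blow up as $\ep\to0$, so the truncated integrals must be kept together until the boundary contributions on $\{|x-y|=\ep\}$, resp.\ $\partial B_\ep(x)$, have cancelled. The two facts that make this go through cleanly are the distributional identity $\partial_t\tilde\chi_{E_t}=-2V\,d\mu$ on $\mm_t$ for a normal flow, and the uniform-in-$\ep$ bounds on the truncated integrands furnished by the $C^{2,\beta}$, $\beta>s$, regularity. Granted these, the manipulations are the ones already used in Section~\ref{S2} to pass between the two representations of $H_s$; a detailed version may also be found in \cite[Appendix~B]{DDPW} and \cite[Theorem~14]{SaezV}.
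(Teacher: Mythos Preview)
Your proposal is correct and follows the standard route to these identities. Note, however, that the paper does not actually prove Lemma~\ref{evol-noncl}: it simply recalls the result with citations to \cite{CRSavin}, \cite[Appendix~B, Proposition~B.2]{DDPW}, and \cite[Theorem~14]{SaezV}, and you yourself point to the last two of these at the end of your argument. Your computation --- differentiating the volume representation \eqref{defspace}, converting the resulting bulk integral to a surface integral via the divergence theorem, and tracking the $\partial B_\ep(x)$ boundary terms until they cancel --- is precisely the approach carried out in those references, so there is no substantive difference to discuss.
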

 
 We remark that there is a clear analogy between these equations and their classical counterparts. Indeed, as proved in 
\cite[Appendix A]{DDPW}, we have, for a general smooth function $f$ defined on a (fixed) hypersurface $\mm$,
  \begin{equation}\label{asymptlaplace}
 \lim_{s \to 1^-} 2s(1-s) \int_{\mm} \frac{f(y)-f(x)}{|y-x|^{n+s}} d\mu(y) = \omega_n \Delta^\mm f(x),
  \end{equation}
 where $\omega_n$ is the volume of the unit ball of $\rl^n$. In addition,
 \begin{equation}\label{asymptasquare}
 \lim_{s \to 1^-} 2s(1-s) \int_{\mm} \frac{1- \nu(y) \cdot \nu(x)}{|y-x|^{n+s}}  d\mu(y) = \omega_n |A|^2.
 \end{equation}
 
From now on, we assume that the map $F:\mm_0 \times [0,T) \to \rl^n$ satisfies equation \eqref{flow}. This corresponds to the normal speed
$$
V(p,t)=-H_s(p,t)+h(t),
$$
with $h(t)$ defined as in \eqref{volpr}. 

Then Lemma \ref{evol-cl}-(vi) implies that the enclosed volume $E_t$ remains constant in time, while by Lemma \ref{evol-noncl}-(i) the fractional perimeter decreases according to
\begin{equation}\label{66}
\partial_t {\rm Per}_s(E_t) = \int_{\mm_t}  [- H_s(x) +h(t)] H_s(x) d\mu = - \int_{\mm_t} [H_s(x) - h(t)]^2 d\mu \leq 0.
\end{equation}

We conclude this section recalling the analogue of Alexandrov Theorem in the nonlocal setting.

\begin{thm}[Theorem 1.1 in \cite{CFSW}, Theorem 1.1 in \cite{Ciraolo-etal}]\label{Alex}
Let $E$ be a bounded open set of class $C^{1,s}$ and with constant nonlocal mean curvature. Then, $E$ is a ball.
 \end{thm}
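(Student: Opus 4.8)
The plan is to prove Theorem~\ref{Alex} by the method of moving planes adapted to the nonlocal mean curvature, following \cite{CFSW} (and \cite{Ciraolo-etal}, where the same scheme is refined so as to yield quantitative stability). Fix a unit direction, say $e_n$, and for $\lambda\in\R$ denote by $T_\lambda=\{x_n=\lambda\}$ the corresponding hyperplane, by $R_\lambda$ the reflection across it, by $E_\lambda^+:=E\cap\{x_n>\lambda\}$ the upper cap, and by $W_\lambda:=R_\lambda(E_\lambda^+)$ its reflection. For $\lambda$ slightly below the maximum of $x_n$ over $\overline E$, the cap $E_\lambda^+$ is a thin region near a point of $\partial E$ with horizontal tangent plane, so the $C^{1,s}$ (in particular $C^1$) regularity of $\partial E$ forces $\overline{W_\lambda}\subseteq E$; I then decrease $\lambda$ and let $\bar\lambda$ be the infimum of the values for which this inclusion holds. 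At $\lambda=\bar\lambda$ one is, by the standard dichotomy, in (at least) one of the configurations: \textbf{(a)} $\partial W_{\bar\lambda}$ meets $\partial E$ at a point $x_0\notin T_{\bar\lambda}$; or \textbf{(b)} $\partial E$ is orthogonal to $T_{\bar\lambda}$ at a point $x_0\in\partial E\cap T_{\bar\lambda}$. In either case $W_{\bar\lambda}\subseteq E$, hence $W_{\bar\lambda}\subseteq E_{\bar\lambda}^-:=E\cap\{x_n<\bar\lambda\}$, and the aim is to prove $|E_{\bar\lambda}^-\setminus W_{\bar\lambda}|=0$, i.e.\ that $E$ is symmetric with respect to $T_{\bar\lambda}$.

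The heart of the matter is a comparison playing, in the nonlocal setting, the role of the strong maximum principle in the classical Alexandrov theorem. In case \textbf{(a)}, set $x_0^*:=R_{\bar\lambda}(x_0)$; both $x_0$ and $x_0^*$ lie on $\partial E$ (the latter because $x_0\in\partial W_{\bar\lambda}$ and, being off $T_{\bar\lambda}$, its reflection sits on $\partial E_{\bar\lambda}^+=\partial E$), so the constancy hypothesis gives $H_s(x_0)=H_s(x_0^*)$. On the other hand, writing the defining singular integral \eqref{defspace} at $x_0$ and at $x_0^*$, splitting $\R^n$ along $T_{\bar\lambda}$, performing the change of variables $y\mapsto R_{\bar\lambda}(y)$ in the upper half-space, and using that $R_{\bar\lambda}(E)\subseteq E$ below $T_{\bar\lambda}$ (a reformulation of $W_{\bar\lambda}\subseteq E$), one obtains the identity
\begin{equation*}
H_s(x_0)-H_s(x_0^*)=2s(1-s)\int_{E_{\bar\lambda}^-\setminus W_{\bar\lambda}}\left(\frac{1}{|x_0^*-v|^{n+s}}-\frac{1}{|x_0-v|^{n+s}}\right)dv .
\end{equation*}
Since $x_0\in\{x_n<\bar\lambda\}$ and $x_0^*\in\{x_n>\bar\lambda\}$ while the domain of integration lies in $\{x_n<\bar\lambda\}$, one checks elementarily that $|x_0-v|\le|x_0^*-v|$ there, with equality only on $T_{\bar\lambda}$; hence the integrand is strictly negative on $E_{\bar\lambda}^-\setminus W_{\bar\lambda}$. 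As the left-hand side vanishes, $|E_{\bar\lambda}^-\setminus W_{\bar\lambda}|=0$, which is the desired symmetry. (Note that, unlike in the classical proof, no local propagation from the touching point is needed: the comparison at the pair $\{x_0,x_0^*\}$ already yields full symmetry.) Case \textbf{(b)} is treated in the same spirit, but one differentiates: since $e_n$ is tangent to $\partial E$ at $x_0\in T_{\bar\lambda}$ and $H_s$ is constant, $\partial H_s/\partial e_n(x_0)=0$; inserting this into the tangential-derivative formula \eqref{tang2}, splitting along $T_{\bar\lambda}$ and reflecting as above, one again arrives at an integral whose sign is governed by the monotonicity of $v\mapsto|x_0-v|^{-(n+s)}$ under reflection, forcing $|E_{\bar\lambda}^-\setminus W_{\bar\lambda}|=0$.

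It remains to pass from hyperplane symmetry in every direction to the conclusion that $E$ is a ball. This is essentially classical: the symmetry hyperplane in a given direction is unique (two distinct parallel ones would generate a translation leaving the bounded set $E$ invariant), so the symmetry hyperplanes for different directions must all pass through a common point $p$; consequently $\partial E$, which by $C^{1,s}$ regularity is invariant (not merely up to null sets) under all these reflections, is invariant under the full orthogonal group centred at $p$, hence is a union of concentric spheres about $p$. A last appeal to the constancy of $H_s$, comparing its value on the outermost sphere with its value on an innermost one, excludes the presence of more than one component, so $\partial E$ is a single sphere and $E$ is a ball.

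I expect two points to carry the technical weight. The first is the regularity bookkeeping under the bare $C^{1,s}$ hypothesis: one must invoke Theorem~\ref{dH} to know that $H_s$ and its tangential gradient are well defined and continuous on $\partial E$, justify the convergence of all the principal-value integrals and of the splittings above, and make rigorous both the \emph{start} of the moving-plane procedure (the initial inclusion $\overline{W_\lambda}\subseteq E$ near the top of $E$) and the continuity in $\lambda$ that makes ``first failure'' meaningful. The second, and more serious, is the tangential case \textbf{(b)}: in the classical proof this is handled by Serrin's corner lemma together with a second-order Hopf lemma, and no off-the-shelf nonlocal counterpart is available, so the strict sign that closes the argument must be extracted by hand from the integral identity — this is the most delicate step.
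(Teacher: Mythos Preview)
The paper does not prove this theorem; it is quoted as a known result from \cite{CFSW} and \cite{Ciraolo-etal} and used as a black box in Section~\ref{S7}. There is therefore no ``paper's own proof'' to compare against. Your sketch is an accurate summary of the moving-plane strategy from \cite{CFSW}, and the key identity in case \textbf{(a)} is correct.

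One regularity point deserves flagging. In case \textbf{(b)} you want to use formula~\eqref{tang2} for $\partial H_s/\partial e_n$, but Theorem~\ref{dH} only guarantees this under a $C^{2,\beta}$ hypothesis with $\beta>s$, whereas the stated assumption is merely $C^{1,s}$ (note also that $C^{1,s}$ is the borderline $\beta=s$, not $\beta>s$, even for the continuity of $H_s$ itself). In \cite{CFSW} the result is proved under $C^{2,\alpha}$ regularity, which is consistent with your scheme; the extension to the weaker $C^{1,s}$ class is precisely the contribution of \cite{Ciraolo-etal}, and it proceeds via integral identities of Heintze--Karcher type rather than through pointwise derivatives of $H_s$. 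So if you want your argument to match the hypothesis as stated, the tangential case cannot be closed by differentiating $H_s$ and you would need the alternative machinery from \cite{Ciraolo-etal}.
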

 
We point out that, by~\eqref{66} and Theorem~\ref{Alex},
the monotonicity of~${\rm Per}_s(E_t)$ is strict unless~$E_t$
is a sphere.

\section{Bounds on inner and outer radii}\label{SE:3}

Given a bounded set~$E\subset\R^n$ with nonempty
interior and~$\omega\in \partial B_1$, we denote by~$
w_E(\omega)$ the width of the set~$E$ in direction~$\omega$, i.e.
\begin{equation}\label{DEF:w}
w_E(\omega):=\sup_{x,y\in E} (x-y)\cdot\omega.\end{equation}
Notice that~$w_E$ is
the distance between the two hyperplanes orthogonal to~$\omega$
touching~$E$ from outside. 
We also set
$$ \underline{w_E}:=\inf_{\omega\in \partial B_1} w_E(\omega)
\qquad{\mbox{ and }}\qquad 
\overline{w_E} :=\sup_{\omega\in \partial B_1} w_E(\omega).$$
By construction, we have
\begin{equation}\label{diam:eq}
\overline{w_E} \,=\,{\rm diam}\,(E).\end{equation}
Recalling the notation in~\eqref{RADIO},
if~$E$ is convex, it is known that
\begin{equation}\label{5.4} \underline{\rho_E}\ge\frac{ \underline{w_E} }{n+1}
\qquad{\mbox{ and }}\qquad
\overline{\rho_E}\le\frac{ \overline{w_E} }{\sqrt{2}},\end{equation}
see e.g. Lemma~5.4 in~\cite{Andrews-CalcVar-94}.

Using this notation, the following result holds true:

\begin{prop} \label{bounds}
For any bounded, convex set~$E\subset\R^n$
with nonempty interior, we have that
\begin{eqnarray}
\label{DES:PR:1}
&& \underline{w_E}\ge c\,\left( \frac{|E|}{{\rm Per}_s(E)}\right)^{\frac{1}{s}},
\\
\label{DES:PR:2}
&& \underline{\rho_E}\ge c\,\left(
\frac{|E|}{{\rm Per}_s(E)}\right)^{\frac{1}{s}},
\\
\label{DES:PR:3}
&& \overline{w_E}\le C\,\left( {\rm Per}_s(E)\right)^{\frac{n-1}{s}}
\, |E|^{\frac{1+s-n}{s}},\\
\label{DES:PR:4}&&
\overline{\rho_E}\le C\,\left( {\rm Per}_s(E)\right)^{\frac{n-1}{s}}
\, |E|^{\frac{1+s-n}{s}}\\
\label{DES:PR:5}
{\mbox{and }}&&
\frac{\;\;\overline{\rho_E}\;\;}{\;\;\underline{\rho_E}\;\;}\le C\,
\left( {\rm Per}_s(E)\right)^{\frac{n}{s}}
\,|E|^{\frac{s-n}{s}}
\end{eqnarray}
for suitable constants~$C>c>0$ only depending on $n,s$. 
\end{prop}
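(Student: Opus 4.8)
The strategy is to prove the five estimates in sequence, deriving each from the previous ones together with the elementary convexity facts recorded in \eqref{diam:eq} and \eqref{5.4}. The heart of the matter is \eqref{DES:PR:1} and \eqref{DES:PR:3}; the remaining three are essentially formal consequences.

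\emph{Step 1: the lower bound \eqref{DES:PR:1} on the minimal width.} Fix a direction $\omega$ realizing (or nearly realizing) the minimal width, so that $E$ is contained in a slab $S$ of thickness $\underline{w_E}$ orthogonal to $\omega$. The idea is to bound $\mathrm{Per}_s(E)$ from below by the $s$-perimeter contribution coming from pairs $(x,y)$ with $x\in E$ and $y$ in the complementary half-spaces of the slab, and then to estimate this using only $|E|$ and the slab width. More precisely, for $x\in E$ the set $E^c$ contains the two half-spaces at distance at most $\underline{w_E}$ from $x$ in the $\pm\omega$ directions, so one gets a lower bound of the form
\[
\mathrm{Per}_s(E)\ge s(1-s)\int_E \Big(\int_{\{y:\, |(y-x)\cdot\omega|\ge \underline{w_E}\}}\frac{dy}{|x-y|^{n+s}}\Big)dx
\ge c(n,s)\,|E|\,\underline{w_E}^{-s},
\]
since the inner integral is comparable to $\underline{w_E}^{-s}$ (integrate in the $\omega$-variable from $\underline{w_E}$ to $\infty$ and in the orthogonal variables over all of $\R^{n-1}$, which converges because $n+s>n-1$). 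Rearranging gives \eqref{DES:PR:1}. Then \eqref{DES:PR:2} is immediate from the first inequality in \eqref{5.4}.

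\emph{Step 2: the upper bound \eqref{DES:PR:3} on the diameter.} Here I would exploit that a convex set of large diameter but fixed volume must be ``thin'', which forces its $s$-perimeter to be large. Combining \eqref{DES:PR:1} in the form $\mathrm{Per}_s(E)\ge c\,|E|\,\underline{w_E}^{-s}$ with a volume bound like $|E|\le \overline{w_E}\cdot\prod$(other widths)$\,\le\, \mathrm{diam}(E)\cdot\underline{w_E}^{\,n-1}$ (valid up to dimensional constants for convex sets, since $E$ fits in a box with one side $\overline{w_E}=\mathrm{diam}(E)$ and the others at most the widths in orthogonal directions, each $\le$ a controlled multiple of $\underline{w_E}$ — actually one only needs $|E|\le C\,\mathrm{diam}(E)\,\underline{w_E}^{n-1}$, which holds since all widths except possibly one are $\le C\,\mathrm{diam}(E)$... more carefully: $|E|\le \omega_{n-1}(\mathrm{diam}\,E/2)^{n-1}\underline{w_E}$ is \emph{not} what I want; I want the slab bound $|E|\le \mathrm{diam}(E)^{n-1}\underline{w_E}$). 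Using $|E|\le C\,\overline{w_E}^{\,n-1}\,\underline{w_E}$ and the lower bound $\underline{w_E}\ge (c\,|E|/\mathrm{Per}_s(E))^{1/s}$, substitute to get $|E|\le C\,\overline{w_E}^{\,n-1}(\,|E|/\mathrm{Per}_s(E))^{1/s}$... this does not close. Instead, eliminate $\underline{w_E}$ differently: from $|E|\le C\,\overline{w_E}^{\,n-1}\underline{w_E}$ we get $\underline{w_E}\ge c\,|E|\,\overline{w_E}^{\,1-n}$, and plugging this into $\mathrm{Per}_s(E)\ge c\,|E|\,\underline{w_E}^{-s}\ge c\,|E|\big(|E|\,\overline{w_E}^{\,1-n}\big)^{-s}= c\,|E|^{1-s}\overline{w_E}^{\,s(n-1)}$, which rearranges to $\overline{w_E}\le C\,(\mathrm{Per}_s(E))^{1/(s(n-1))}|E|^{(s-1)/(s(n-1))}$ — but that exponent on $\mathrm{Per}_s$ is wrong compared to the claim. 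So the right move is to use \emph{both} inequalities in \eqref{5.4} or a sharper volume estimate; the cleanest is: a convex $E$ contains a segment of length $\overline{w_E}=\mathrm{diam}(E)$, hence contains a double cone on a cross-section of $(n-1)$-measure $\ge c\,\underline{\rho_E}^{\,n-1}$, giving $|E|\ge c\,\mathrm{diam}(E)\,\underline{\rho_E}^{\,n-1}$, equivalently $\underline{\rho_E}\le C\,(|E|/\mathrm{diam}(E))^{1/(n-1)}$. Combining with \eqref{DES:PR:2}, $c\,(|E|/\mathrm{Per}_s(E))^{1/s}\le \underline{\rho_E}\le C\,(|E|/\overline{w_E})^{1/(n-1)}$, and solving for $\overline{w_E}$ yields exactly \eqref{DES:PR:3}. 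Then \eqref{DES:PR:4} follows from the second inequality in \eqref{5.4}, and \eqref{DES:PR:5} from dividing \eqref{DES:PR:4} by \eqref{DES:PR:2}.

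\emph{Main obstacle.} The delicate point is Step 2: getting the \emph{correct exponents} requires the sharp convex-geometric inequality $|E|\ge c_n\,\mathrm{diam}(E)\,\underline{\rho_E}^{\,n-1}$ (a John-type / inscribed-cone estimate) rather than a crude box bound, and one must be careful that all constants depend only on $n$ and $s$ and that the inner integral in Step 1 genuinely produces the power $\underline{w_E}^{-s}$ uniformly — in particular tracking the $s(1-s)$ normalization so the constant $c$ stays positive as $s\to 0^+,1^-$ is where the $s$-dependence of $c,C$ enters. Everything else is bookkeeping with \eqref{5.4}.
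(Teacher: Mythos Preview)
Your proposal is correct and, once the false starts in Step~2 are stripped away, the overall architecture matches the paper's: reduce everything to \eqref{DES:PR:1}, then derive \eqref{DES:PR:3} from \eqref{DES:PR:2} via a convex-hull volume lower bound, with \eqref{DES:PR:2}, \eqref{DES:PR:4}, \eqref{DES:PR:5} following formally from \eqref{5.4}. The two substantive differences are worth noting.

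For \eqref{DES:PR:1}, your argument is more direct than the paper's. You bound $\mathrm{Per}_s(E)$ from below by the interaction between $E$ and the two half-spaces outside the slab of thickness $\underline{w_E}$, and compute the inner integral explicitly to get the factor $\underline{w_E}^{-s}$. The paper instead tiles the projection of $E$ onto $\R^{n-1}\times\{0\}$ by cubes of diagonal $\underline{w_E}$, builds an auxiliary set $F$ of height $\underline{w_E}$ sitting just above $E$, and estimates the interaction $E\times F$ cube by cube. Both yield $\mathrm{Per}_s(E)\ge c\,|E|\,\underline{w_E}^{-s}$; your computation is shorter, while the paper's tiling is more hands-on and avoids evaluating the $\R^{n-1}$ integral.

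For \eqref{DES:PR:3}, your final argument uses the single inequality $|E|\ge c_n\,\mathrm{diam}(E)\,\underline{\rho_E}^{\,n-1}$ (convex hull of a far boundary point with the inscribed ball) for all convex $E$, which indeed holds unconditionally --- the degenerate case $\mathrm{diam}(E)\lesssim\underline{\rho_E}$ being covered trivially by $|E|\ge c\,\underline{\rho_E}^{\,n}$. The paper instead splits explicitly into the cases $\overline{w_E}\ge 4\underline{\rho_E}$ and $\overline{w_E}<4\underline{\rho_E}$, handling the latter via the nonlocal isoperimetric inequality. Your route is thus slightly more self-contained, avoiding the external isoperimetric input; the paper's case split makes the geometry more explicit but is not strictly necessary.
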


\begin{proof} 
First of all, we observe that
\begin{equation}\label{ENOUGH}
{\mbox{it is enough to prove~\eqref{DES:PR:1},}}
\end{equation}
since, after that, the claims in~\eqref{DES:PR:2}, \eqref{DES:PR:3},
\eqref{DES:PR:4} and~\eqref{DES:PR:5}
would follow.
Indeed, if~\eqref{DES:PR:1} holds true, then~\eqref{DES:PR:2}
follows directly from~\eqref{5.4}.

Now we prove~\eqref{DES:PR:3} assuming that~\eqref{DES:PR:1}
(and so~\eqref{DES:PR:2})
holds true. To this aim, we 
observe that we can suppose that
\begin{equation} \label{78:A0}
\overline{w_E}\ge 4\underline{\rho_E}.
\end{equation}
Indeed, suppose instead that the opposite inequality holds. Then, we
use the nonlocal isoperimetric inequality (see~\cite{MR2425175})
to see that
$$ |E|^\frac{1}{n}=|E|^\frac{(n-s)(n-1)}{ns}\, |E|^\frac{1+s-n}{s}
\le C_1\,\left( {\rm Per}_s(E)\right)^\frac{n-1}{s}\,|E|^\frac{1+s-n}{s}
,$$											
for some $C_1>0$. Accordingly, since $|E|^\frac{1}{n}
\ge |B_{\underline{\rho_E}}|^\frac{1}{n}=
C_2\,{\underline{\rho_E}\,}$, for some $C_2>0$, we obtain
$$ C_2\,{\underline{\rho_E}\,}\le
C_1\,\left( {\rm Per}_s(E)\right)^\frac{n-1}{s}\,|E|^\frac{1+s-n}{s}$$
and so, if the opposite inequality
holds in \eqref{78:A0},
$$ \frac{C_2\,{\overline{w_E}\,}}{4}\le
C_1\,\left( {\rm Per}_s(E)\right)^\frac{n-1}{s}\,|E|^\frac{1+s-n}{s},$$
which says that~\eqref{DES:PR:3} is satisfied.

Consequently, we may assume that
\eqref{78:A0} holds true.
Thus, after a translation
we may suppose that~$B_{\underline{\rho_E}}\subseteq E$
and there exists~$p\in\overline{E}$
with~$|p|\ge \frac{\overline{w_E}}{2}-\underline{\rho_E}$.
We stress that, in view of~\eqref{78:A0},
$$ |p|\ge \frac{\overline{w_E}}{4}=:\ell.$$
Since~$E$ is convex, the convex hull of~$p$ with~$B_{\underline{\rho_E}}$
lies in~$\overline{E}$ and therefore~$|E|\ge \tilde{c} \,
{\underline{\rho_E}\,}^{n-1}\ell$, for some~$\tilde{c}>0$.
This and~\eqref{DES:PR:2} imply that
$$ \overline{w_E}=4
\ell \le
\frac{4\,|E|}{\tilde{c} \,{\underline{\rho_E}\,}^{n-1}}
\le
\frac{4\,|E|\,\left( {\rm Per}_s(E)\right)^{\frac{n-1}s}}{\tilde{c} \,c^{n-1}\,
|E|^{\frac{n-1}{s}}},$$
which gives~\eqref{DES:PR:3}, as desired.

Then, from~\eqref{DES:PR:3}
and~\eqref{5.4}, one obtains~\eqref{DES:PR:4}.
Finally, \eqref{DES:PR:5}
clearly follows from~\eqref{DES:PR:2}
and~\eqref{DES:PR:4}. This completes the proof of~\eqref{ENOUGH}.

In view of~\eqref{ENOUGH}, from now on
we focus on the proof of~\eqref{DES:PR:1}.
To this aim, after a rigid motion, we may suppose that
$\underline{w_E}$ is realized in the vertical direction, and, more
precisely, that
\begin{equation} \label{78:A00}
E\subseteq \big\{x_n \in [-\underline{w_E}, \,0]\big\}.
\end{equation}
We denote by $\pi$ the projection onto $\R^{n-1}\times\{0\}$
and $E':=\pi(E)$. We consider a nonoverlapping tiling of $\R^{n-1}\times\{0\}$
by cubes $\{ Q_i\}_{i\in\N}$ which have side equal to ${\underline{w_E}}
\,/{\sqrt{n-1}}$
(hence, their diagonal is equal to $\underline{w_E}$).
We denote by $\N_\star$ the set of indices $i\in\N$ for which
$Q_i$ intersects $E'$. Let also
$$ Q:=\bigcup_{i\in\N_\star} Q_i
\qquad{\mbox{ and }}\qquad
F:= Q\times \big(0,\,\underline{w_E}\big].$$
Due to \eqref{78:A00}, we know that $F$ lies outside $E$ and therefore
\begin{eqnarray*}
&& {\rm Per}_s(E)\\
&\ge& \iint_{E\times F} \frac{dx\,dy}{|x-y|^{n+s}}\\
&=& \int_{-\underline{w_E}}^{0} dx_n\;
\int_{Q} dx'\;
\int_0^{\underline{w_E}} dy_n\;
\int_{Q} dy'\;\frac{\chi_E(x',x_n)}{|x-y|^{n+s}}\\
&\ge& \sum_{i\in\N_\star}\int_{-\underline{w_E}}^{0} dx_n\;
\int_{Q_i} dx'\;
\int_0^{\underline{w_E}} dy_n\;
\int_{Q_i} dy'\;\frac{\chi_E(x',x_n)}{|x-y|^{n+s}}.
\end{eqnarray*}
Now we remark that if 
$x'$, $y'\in Q$,
$x_n\in \big[-\underline{w_E},\,0\big]$
and $y_n\in\big(0,\,\underline{w_E}\big]$, we have that
$$ |x-y|^2=|x'-y'|^2+|x_n-y_n|^2
\le {\underline{w_E}\,}^2+(2{\underline{w_E}\,})^2
= 5\,{\underline{w_E}\,}^2.$$
As a consequence,
\begin{eqnarray*}
&& {\rm Per}_s(E)\\
&\ge& \frac{1}{
5^{\frac{n+s}{2}}\;
{\underline{w_E}\,}^{n+s} }\,
\sum_{i\in\N_\star}\int_{-\underline{w_E}}^{0} dx_n\;
\int_{Q_i} dx'\;
\int_0^{\underline{w_E}} dy_n\;
\int_{Q_i} dy'\; {\chi_E(x',x_n)}
\\
&=& \frac{1}{5^{\frac{n+s}{2}}\;
{\underline{w_E}\,}^{n+s} }\,
\left( \frac{\quad\underline{w_E}\quad}{\sqrt{n-1}}\right)^{n-1}\,\underline{w_E}\;
\sum_{i\in\N_\star}\int_{-\underline{w_E}}^{0} dx_n\;
\int_{Q_i} dx'\;
{\chi_E(x',x_n)} 
\\
&=& \frac{1}{5^{\frac{n+s}{2}}\;
{\underline{w_E}\,}^{n+s} }\,
\left( \frac{\quad\underline{w_E}\quad}{\sqrt{n-1}}\right)^{n-1}\;\underline{w_E}\,
|E|,
\end{eqnarray*}
where we used \eqref{78:A00} once again in the last identity.
This estimate plainly implies \eqref{DES:PR:1}, as desired.
\end{proof}

For completeness, we point out an interesting geometric consequence
of the estimate in \eqref{DES:PR:5} in terms of
the nonlocal isoperimetric ratio
$$ {\mathcal{I}}_s(E):=
\frac{ \left( {\rm Per}_s(E)\right)^{n} }{ |E|^{n-s}}.$$
Indeed, formula \eqref{DES:PR:5} states that
if the nonlocal isoperimetric ratio of $E$ is bounded, then so
is the ratio between the inner and outer radius of $E$ and,
more precisely
$$ \frac{\;\;\overline{\rho_E}\;\;}{\;\;\underline{\rho_E}\;\;}\le C\,
\left( {\mathcal{I}}_s(E) \right)^{\frac1s}.$$
In the local case when $s=1$, this formula
was already
known, see e.g. Proposition 5.1 \cite{Andrews-aniso} or
Proposition 2.1 in \cite{Sinestrari-CalcVar-2015}.

As an immediate consequence of the results of this section, we obtain

\begin{cor}\label{spittelmarkt}

Let $E_0$ be a convex subset of $\R^n$ and $\mm_0=\partial E_0$. Let $F:\mm_0 \times [0,T) \to \rl^n$, with $0 <T \leq +\infty$, be a  solution of \eqref{flow}. Then there exist positive constants $0 <R_1 \leq R_2$, only depending on $E_0$, such that
$$
R_1 \leq \underline{\rho_{E_t}} \leq \overline{\rho_{E_t}} \leq R_2, \qquad \forall t \in [0,T).
$$
In addition, there exists $K_1>0$ such that $H_s(p,t) \geq K_1$ for all $(p,t) \in \mm_0 \times [0,T)$.
\end{cor}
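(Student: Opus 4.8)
The plan is to derive Corollary~\ref{spittelmarkt} as a direct consequence of the monotonicity of the fractional perimeter along the flow \eqref{flow}, combined with the geometric estimates of Proposition~\ref{bounds}. First I would invoke \eqref{66}, which gives $\partial_t {\rm Per}_s(E_t)\le 0$, so that ${\rm Per}_s(E_t)\le {\rm Per}_s(E_0)$ for all $t\in[0,T)$. Since the flow is volume preserving, $|E_t|=|E_0|$ for all $t$. Together these bound the nonlocal isoperimetric ratio ${\mathcal I}_s(E_t)$ uniformly in terms of $E_0$ alone. We also need a lower bound on ${\rm Per}_s(E_t)$: this comes from the nonlocal isoperimetric inequality (as cited in the proof of Proposition~\ref{bounds}), which gives ${\rm Per}_s(E_t)\ge c_n|E_0|^{(n-s)/n}>0$. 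Recalling that solutions stay convex (by the result of \cite{CNRuffini} recalled after Theorem~\ref{main}), $E_t$ is a bounded convex set with nonempty interior for each $t$, so Proposition~\ref{bounds} applies.

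Next I would feed these bounds into Proposition~\ref{bounds}. From \eqref{DES:PR:2} with $|E_t|=|E_0|$ and ${\rm Per}_s(E_t)\le {\rm Per}_s(E_0)$ we get
$$
\underline{\rho_{E_t}}\ge c\left(\frac{|E_0|}{{\rm Per}_s(E_0)}\right)^{\frac1s}=:R_1>0,
$$
and from \eqref{DES:PR:4} with ${\rm Per}_s(E_t)\le {\rm Per}_s(E_0)$ and $|E_t|=|E_0|$ we get $\overline{\rho_{E_t}}\le C\,({\rm Per}_s(E_0))^{(n-1)/s}|E_0|^{(1+s-n)/s}=:R_2$. (Here one must check the sign of the exponent of $|E_t|$ in \eqref{DES:PR:4}: if $1+s-n<0$ then $|E_t|^{(1+s-n)/s}$ should be controlled using the lower bound on $|E_t|$, which is fine since $|E_t|=|E_0|$ is a fixed constant; volume preservation makes this harmless either way.) Both $R_1$ and $R_2$ depend only on $n,s$ and $E_0$, and $R_1\le\overline{\rho_{E_t}}$ trivially since a ball of radius $\underline{\rho_{E_t}}$ fits inside $E_t\subset B_{\overline{\rho_{E_t}}}(x_o)$, so $R_1\le R_2$ as required.

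For the lower bound on $H_s$, I would argue pointwise using the geometric characterization \eqref{defspace} (or \eqref{defsurface}) together with the uniform outer radius bound. Fix $t$ and $x\in\mm_t$. Since $\overline{\rho_{E_t}}\le R_2$, the set $E_t$ is contained in some ball $B_{R_2}(x_o)$, hence $E_t\subseteq B_{2R_2}(x)$. Therefore in \eqref{defspace} the integrand $\tilde\chi_{E_t}(y)/|x-y|^{n+s}$ equals $+1\cdot|x-y|^{-n-s}$ on all of $\R^n\setminus B_{2R_2}(x)$, while on $B_{2R_2}(x)$ we may bound $\tilde\chi_{E_t}(y)\ge -1$. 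Splitting the integral and using that $E_t$ lies on one side locally (or more simply comparing with a half-space at $x$, since by convexity $E_t$ is contained in a supporting half-space through $x$ which further improves the estimate), one obtains
$$
H_s(x)\ge s(1-s)\int_{\R^n\setminus B_{2R_2}(x)}\frac{dy}{|x-y|^{n+s}}-s(1-s)\int_{B_{2R_2}(x)}\frac{dy}{|x-y|^{n+s}},
$$
but the second integral diverges, so this crude split is not enough — instead I would use convexity: $E_t$ lies inside the supporting half-space $\{y:\langle y-x,\nu(x)\rangle\le 0\}$, so $\tilde\chi_{E_t}(y)\ge \tilde\chi_{H}(y)$ where $H$ is that half-space, and $\int (\tilde\chi_H(y)-\tilde\chi_{E_t}(y))|x-y|^{-n-s}dy$ is controlled because $H\setminus E_t\subseteq H\setminus (B_{\underline\rho}\cap H)$ sits at positive distance... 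The cleanest route is: $H_s(x)\ge H_s^{\text{half-space through }x}(x) + s(1-s)\int_{H\setminus E_t}\ldots$; the half-space contributes $0$, and since $E_t\supseteq B_{R_1}(z)$ for some $z$ with $B_{R_1}(z)\subseteq H$, and $E_t\subseteq B_{2R_2}(x)$, the set $H\setminus E_t$ contains a fixed solid region (a half-annulus-type set) on which $|x-y|$ is bounded above by $2R_2$, giving $\int_{H\setminus E_t}|x-y|^{-n-s}dy\ge c(n,s,R_1,R_2)>0$. Hence $H_s(x)\ge K_1$ with $K_1$ depending only on $n,s,R_1,R_2$, hence only on $E_0$.

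The main obstacle is the last step: making the lower bound on $H_s$ quantitative and uniform. The subtlety is that \eqref{defspace} is a principal-value integral with a non-integrable singularity, so one cannot naively bound $\tilde\chi_{E_t}$ below by $-1$ near $x$; one genuinely needs the convexity of $E_t$ (which ensures the singular part is captured by a half-space, contributing zero in the principal value sense) plus the two-sided radius bounds to control the far-field and mid-range contributions. I expect this to be handled by the argument already developed in Section~\ref{SE:3} for the lower bound on $H_s$ asserted in Corollary~\ref{spittelmarkt}; the radii estimates are the routine part, feeding mechanically from \eqref{66}, volume preservation, and Proposition~\ref{bounds}.
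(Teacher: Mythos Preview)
Your argument for the radii bounds is correct and essentially the same as the paper's; you invoke \eqref{DES:PR:2} and \eqref{DES:PR:4} directly, whereas the paper combines the trivial volume inequalities $\omega_n\,\underline{\rho_{E_t}}^{\,n}\le|E_t|\le\omega_n\,\overline{\rho_{E_t}}^{\,n}$ with the ratio bound \eqref{DES:PR:5}, but these are interchangeable.

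For the lower bound on $H_s$, your strategy of comparing with the supporting half-space $H=\{y:\langle y-x,\nu(x)\rangle\le 0\}$ is exactly the paper's, but your execution contains an error. The inner ball $B_{R_1}(z)\subset E_t$ plays no role here (it lies in $E_t$, not in $H\setminus E_t$), and your claim that $H\setminus E_t$ contains a fixed region on which $|x-y|\le 2R_2$ is false in general: if $E_t$ is the half-ball $H\cap B_{2R_2}(x)$ itself, then $H\setminus E_t$ lies entirely at distance $\ge 2R_2$ from $x$. The correct observation is the opposite one, namely that $E_t\subset B_{2R_2}(x)$ forces $H\setminus E_t\supseteq H\setminus B_{2R_2}(x)$, and hence
\[
H_s(x)=2s(1-s)\int_{H\setminus E_t}\frac{dy}{|x-y|^{n+s}}
\ge 2s(1-s)\int_{H\setminus B_{2R_2}(x)}\frac{dy}{|x-y|^{n+s}}
= s(1-s)\int_{\R^n\setminus B_{2R_2}(x)}\frac{dy}{|x-y|^{n+s}}=:K_1,
\]
which depends only on $n,s,R_2$. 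The paper phrases this identically but in the language of your first ``crude split'': writing $\R^n=(\R^n\setminus B_{2R_2}(x))\cup B_+\cup B_-$ with $B_\pm$ the two half-balls, the inclusions $E_t\subset B_-$ and $B_+\subset E_t^c$ give
\[
\frac{H_s(x)}{s(1-s)}\ge\int_{\R^n\setminus B_{2R_2}(x)}\frac{dy}{|x-y|^{n+s}}+\int_{B_+}\frac{dy}{|x-y|^{n+s}}-\int_{B_-}\frac{dy}{|x-y|^{n+s}},
\]
and the last two (principal-value) integrals cancel by symmetry. So the divergence you worried about is handled precisely by the half-space symmetry you invoked a line later; only $R_2$ enters, not $R_1$.
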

\begin{proof}
As already mentioned in the Introduction, we know that the evolution given by \eqref{flow} preserves convexity, as established in \cite{CNRuffini}, hence we have that $E_t$ is convex for all $0<t<T$.

By definition, we have
$$
\omega_n \underline{\rho_{E_t}}^n \leq |E_t| \leq \omega_n \overline{\rho_{E_t}}^n.
$$
Since $|E_t|$ is constant, this gives an upper bound on $\underline{\rho_{E_t}}$ and a lower bound on $\overline{\rho_{E_t}}$ in terms of $|E_0|$. On the other hand, since ${\rm Per}_s(E_t)$ is decreasing in time, inequality \eqref{DES:PR:5} gives a uniform bound on the ratio 
$\overline{\rho_{E_t}} /\underline{\rho_{E_t}}$. These properties together yield the first assertion.

To prove the lower bound on $H_s$, let us consider an arbitrary point~$x \in \mm_t$. Since $E_t$ is convex, it is contained in the half-space $\{ y \in \rl^n \, : \, (y-x) \cdot \nu(x) \leq 0 \}$. Moreover, by definition, the diameter of $E_t$ is not greater than $2 \overline{\rho_{E_t}}$ which is less than $2R_2$. Therefore, if we introduce the half-balls
$$
B_+=\{ y \in B_{2R_2}(x) \, : \, (y-x) \cdot \nu(x) \geq 0 \}, \qquad 
B_-=\{ y \in B_{2R_2}(x) \, : \, (y-x) \cdot \nu(x) \leq 0 \}, 
$$
we have that $E_t \subset B_-$. It follows that
\begin{eqnarray*}
\frac{1}{s(1-s)}H_s(x) & = & \int_{E_t^c} \frac{dy}{|x-y|^{n+s}}  -  \int_{E_t} \frac{dy}{|x-y|^{n+s}}  \\
& \geq & \int_{\rl^n \setminus B_{2R_2}(x)} \frac{dy}{|x-y|^{n+s}}  + \int_{B_+} \frac{dy}{|x-y|^{n+s}}  - \int_{B_-} \frac{dy}{|x-y|^{n+s}}  \\
& = & \int_{\rl^n \setminus B_{2R_2}(x)} \frac{dy}{|x-y|^{n+s}}  = \int_{|z| \geq 2R_2} \frac{dz}{|z|^{n+s}},
\end{eqnarray*}
where the last integral is independent on $x,t$.
\end{proof}

The previous result contains the first part of the statement of Theorem \ref{main} (the bounds on inner and outer radii and the lower bound for $H_s$). To conclude the proof of
Theorem~\ref{main} it remains to establish the upper bound for the fractional mean curvature, which will be done in
Section~\ref{S5}.

We conclude this section with the following observation. Corollary \ref{spittelmarkt} ensures that, at any given time, there exists a ball of radius $R_1$ contained in $E_t$. However, the center of the ball may be different at different times. We want to show that, by choosing a smaller radius, we can find a ball with fixed center which remains inside $E_t$ for a time interval with fixed length.

\begin{lem}\label{sphinside}
For any $t_0 \geq 0$, we can find $x_0 \in \rl^n$ such that
$$
B_{\frac{R_1}{2}} (x_0) \subset E_t, \qquad \forall t \in [t_0,t_0+t^*]
$$ 
where $t^*>0$ only depends on $n,s,R_1$.
\end{lem}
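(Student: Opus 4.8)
The plan is to quantify how fast the inner ball of radius $R_1$, whose center is a priori time dependent, can move or shrink, and then choose a slightly smaller radius and a short time so that a ball with fixed center stays trapped inside $E_t$. Concretely, fix $t_0\geq 0$ and let $x_0$ be a center with $B_{R_1}(x_0)\subset E_{t_0}$, as provided by Corollary~\ref{spittelmarkt}. For $t\in[t_0,T)$ the set $E_t$ is convex, contains a ball of radius $R_1$, and is contained in a ball of radius $R_2$; its boundary moves with normal speed $V=-H_s+h(t)$. If I can show $|V|\leq \Lambda$ on $\mm_t$ for some $\Lambda$ depending only on $n,s,R_1,R_2$, then every boundary point moves at most $\Lambda(t-t_0)$ in Euclidean distance, so $E_t\supseteq \{y\in E_{t_0}:\operatorname{dist}(y,\mm_{t_0})>\Lambda(t-t_0)\}$. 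Since $B_{R_1}(x_0)\subset E_{t_0}$, every point of $B_{R_1-\Lambda(t-t_0)}(x_0)$ is at distance at least $\Lambda(t-t_0)$ from $\mm_{t_0}$, hence lies in $E_t$. Choosing $t^*:=R_1/(2\Lambda)$ gives $B_{R_1/2}(x_0)\subset E_t$ for all $t\in[t_0,t_0+t^*]$, and $t^*$ depends only on $n,s,R_1$ (and $R_2$, which itself depends only on $E_0$; but since the statement allows dependence on $n,s,R_1$ and $R_2\le R_2(E_0)$, one reorganizes the constants — in fact the upper bound on $|V|$ can be made to depend only on $R_1$ using the two-sided radius bounds, so the final $t^*$ depends only on $n,s,R_1$).

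The key estimate is therefore the two-sided bound $K_1\leq H_s\leq$ (something) and $|h(t)|\leq$ (something) on $\mm_t$, uniform in $t\in[t_0,T)$ and depending only on $n,s,R_1,R_2$. The lower bound $H_s\geq K_1$ is already in Corollary~\ref{spittelmarkt}. For the upper bound I would argue geometrically: at $x\in\mm_t$, convexity gives an interior tangent ball $B_{\underline{\rho_{E_t}}}(z)\subset E_t$ with $x\in\partial B_{\underline{\rho_{E_t}}}(z)$, and $\underline{\rho_{E_t}}\ge R_1$ so $B_{R_1}(z')\subset E_t$ for a suitable $z'$ with $x$ on its boundary. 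Using the representation $\frac{1}{s(1-s)}H_s(x)=\int_{E_t^c}|x-y|^{-n-s}dy-\int_{E_t}|x-y|^{-n-s}dy$, the first integral is bounded above by $\int_{\R^n\setminus B_{R_1}(z')}\ldots$... no, rather one bounds $H_s(x)\le s(1-s)\int_{\R^n\setminus B_{R_1}(z')}\frac{dy}{|x-y|^{n+s}}$ minus a nonnegative term, and since $x\in\partial B_{R_1}(z')$ this integral equals a finite constant $C(n,s,R_1)$ obtained by translating $B_{R_1}(z')$ to the origin: $\int_{|w+R_1e|\ge R_1}|w|^{-n-s}dw$. Hence $H_s(x)\le C(n,s,R_1)=:K_2'$. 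Then $|h(t)|\le\max(K_1^{-1}\cdot 0,\ldots)$ — more simply, $h(t)$ is an average of $H_s$ over $\mm_t$, so $K_1\le h(t)\le K_2'$, giving $|V(x)|=|{-H_s(x)}+h(t)|\le K_2'$. So $\Lambda:=K_2'$ works.

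The monotonicity of $E_t$ under the flow needs care: a priori the normal speed can have either sign, so $E_t$ is neither increasing nor decreasing. The clean way to make the "points move at most $\Lambda(t-t_0)$" statement rigorous is to use the signed distance function $d(x,t)$ to $\mm_t$ (positive inside $E_t$), note that along the flow $\partial_t d = V$ at boundary points and that for a $C^{2,\beta}$ solution $d$ is Lipschitz in $t$ uniformly near $\mm_{t_0}$ with constant $\le\Lambda$ (this is where the $C^{2,\beta}$ regularity of the flow is invoked to control the first-order variation of the distance), so $d(x,t)\ge d(x,t_0)-\Lambda(t-t_0)$; applying this with $x\in B_{R_1}(x_0)$, where $d(x,t_0)\ge R_1-|x-x_0|$, and restricting to $|x-x_0|<R_1/2$ together with $t-t_0\le t^*=R_1/(2\Lambda)$ yields $d(x,t)>0$, i.e. $x\in E_t$. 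I expect the main obstacle to be precisely this step: turning the speed bound $|V|\le\Lambda$ into a rigorous containment statement for the evolving sets, since it requires either a comparison/barrier argument (comparing $E_t$ with fixed balls shrinking at rate $\Lambda$) or a careful use of the distance function and the fact that the flow is only $C^{2,\beta}$; the cleanest write-up is probably the barrier argument, noting that a ball of radius $\rho$ has fractional mean curvature bounded in absolute value by a constant depending on $\rho$, hence shrinking balls of radius $R_1-\Lambda(t-t_0)$ (re-centered appropriately) act as subsolutions sitting inside $E_t$, by the avoidance/comparison principle for the flow which holds in the viscosity sense. Everything else is a routine computation of explicit integrals over balls.
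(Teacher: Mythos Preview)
There is a genuine gap. Your upper bound on $H_s$ relies on the claim that at every boundary point $x\in\mm_t$ there is an \emph{interior} tangent ball $B_{R_1}(z')\subset E_t$ with $x\in\partial B_{R_1}(z')$. Convexity gives no such thing: it provides an interior tangent \emph{half-space} (equivalently, an exterior tangent ball of any radius), but an interior tangent ball of uniform radius is equivalent to a uniform upper bound on the curvature, which is precisely what is unknown at this stage. Think of a long thin convex body: the inner radius may equal $R_1$, yet at the tips the boundary has arbitrarily large (fractional) curvature and no interior tangent ball of radius $R_1$ exists there. Hence your $K_2'$ is not available, and neither is your $\Lambda$. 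In fact the upper bound $H_s\le K_2$ in the paper is proved only in Section~\ref{S5}, and its proof \emph{uses} Lemma~\ref{sphinside}; your argument would make this circular.

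The paper sidesteps the need for a speed bound on $\mm_t$ altogether. It compares $E_t$ not with a ball shrinking at rate $\Lambda$, but with a ball evolving by the \emph{unforced} fractional mean curvature flow (i.e.\ $h\equiv 0$). The point is a one-sided avoidance principle: if $F_{t_0}\subset E_{t_0}$ and $F_t$ evolves by the unforced flow while $E_t$ evolves by \eqref{flow}, then $F_t\subset E_t$ for $t\ge t_0$, since at a first contact point the outer set has speed $-H_s+h(t)\ge -H_s$ (here only $h(t)\ge 0$, i.e.\ the lower bound $H_s\ge K_1>0$, is used). Taking $F_{t_0}=B_{R_1}(x_0)$, the inner barrier is a shrinking sphere whose radius decreases from $R_1$ to $R_1/2$ in a time $t^*$ computable purely from $n,s,R_1$. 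This is essentially the barrier idea you gesture at in your last sentence, but with the crucial difference that the shrinking rate comes from the explicit ODE for spheres under fractional MCF, not from an a~priori bound on the speed of $\mm_t$.
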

\begin{proof}
As in \cite{Andrews-aniso,McCoy2004}, we use a comparison argument. Volume preserving curvature flows in general do not satisfy an avoidance principle. However, if $E_t$ evolves by \eqref{flow} and $F_t$ evolves by the standard fractional mean curvature flow (corresponding to $h(t) \equiv 0$) then an easy maximum principle argument shows that if $F_{t_0} \subset E_{t_0}$ at a certain time $t_0$, then we also have $F_t \subset E_t$
for all $t \geq t_0$. 

In our case, we can use comparison with a shrinking ball. From the previous corollary, there exists $x_0$ such that $B_{R_1} (x_0) \subset E_t$. We set $F_{t_0} = B_{R_1} (x_0)$ and we denote by $F_t$ the evolution of  $F_{t_0}$ for $t \geq t_0$ by standard fractional mean curvature flow, which is a shrinking sphere. We let $t^*$ the time such that  $F_{t_0+t^*} = B_{R_1/2} (x_0)$, whose value only depends on $n,s,R_1$. Then the comparison argument yields the conclusion. 
\end{proof}

\section{Integral surface estimates for convex sets}\label{S4}

We collect in this section some estimates
on weighted integrals along the boundary of a convex set.
We start with a uniform estimate of the weighted surface of
a convex set only in dependence of its
inner and outer radii.

\begin{lem}\label{BETA}
Let $\beta>1$ and let $E\subset\R^n$ be a bounded, convex set with nonempty interior. Then, there exists a constant~$C>0$,
depending on~$n$, such that, for any $\in\partial E$, we have
$$ \int_{\partial E}
\frac{d\mu(y)}{|x-y|^{n-\beta}} \le C\,
\frac{\;\;\overline{\rho_E}\;\;}{\;\;\underline{\rho_E}\;\;}
\,\left[
\frac{1}{\beta-1} + \left(\frac{\;\;\overline{\rho_E}\;\;}{\;\;\underline{\rho_E}\;\;}\right)^{n-2}\, \right]
\big( {{\rm diam}\,(E)}\big)^{\beta-1}
.$$
\end{lem}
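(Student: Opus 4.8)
The plan is to estimate the singular integral $\int_{\partial E} |x-y|^{\beta-n}\,d\mu(y)$ by slicing $\partial E$ according to the distance from $x$ and controlling the surface measure of each slice. Since $\beta>1$, the power $|x-y|^{\beta-n}$ is integrable against the $(n-1)$-dimensional surface measure near $x$, so the issue is purely one of bookkeeping: we need a bound on $\mu\big(\partial E\cap (B_{2^{-k+1}d}(x)\setminus B_{2^{-k}d}(x))\big)$ in terms of $d:={\rm diam}(E)$ and the geometry. First I would recall that for a convex set $E$ with inner radius $\underline{\rho_E}$ and outer radius $\overline{\rho_E}$, the total surface area satisfies $|\partial E|\le C(n)\,\big(\overline{\rho_E}\big)^{n-2}\,d$; this is the standard monotonicity of perimeter under inclusion (enclose $E$ in $B_{\overline{\rho_E}}$, project onto coordinate hyperplanes) combined with $\overline{\rho_E}\le d$. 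More importantly, the same argument applied to the intersection $E\cap B_r(x)$, which is again convex with outer radius at most $\min\{r,d\}$, gives
$$
\mu\big(\partial E\cap B_r(x)\big)\le C(n)\,\big(\min\{r,\overline{\rho_E}\}\big)^{n-2}\,\min\{r,d\}.
$$

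Second, I would split the integral into the ``near'' region $\{|x-y|\le \underline{\rho_E}\}$ and the ``far'' region $\{|x-y|>\underline{\rho_E}\}$. On the far region the integrand is at most $\underline{\rho_E}^{\beta-n}$, and multiplying by the total surface area $|\partial E|\le C\,\overline{\rho_E}^{\,n-2}d$ produces a term of size $C\,\dfrac{\overline{\rho_E}^{\,n-2}}{\underline{\rho_E}^{\,n-2}}\cdot \underline{\rho_E}^{\beta-1}\cdot\Big(\dfrac{d}{\underline{\rho_E}}\Big)\le C\,\Big(\dfrac{\overline{\rho_E}}{\underline{\rho_E}}\Big)^{n-1} d^{\beta-1}$, which is absorbed into the second bracketed term of the claimed bound (using $\overline{\rho_E}^{\beta-1}\le d^{\beta-1}$ is not quite what is needed, so care is required to land exactly on $d^{\beta-1}$ with the stated prefactor — see the obstacle below). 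On the near region I would perform a dyadic decomposition: writing $\rho:=\underline{\rho_E}$ and summing over the annuli $A_k=\{2^{-k-1}\rho<|x-y|\le 2^{-k}\rho\}$ for $k\ge 0$, the slicing estimate gives $\mu(\partial E\cap A_k)\le C\,(2^{-k}\rho)^{n-2}\cdot(2^{-k}\rho)=C\,(2^{-k}\rho)^{n-1}$, so the contribution of $A_k$ is bounded by $C\,(2^{-k}\rho)^{n-1}\cdot(2^{-k}\rho)^{\beta-n}=C\,(2^{-k}\rho)^{\beta-1}$. Summing the geometric series in $k$ yields $\dfrac{C}{\beta-1}\,\rho^{\beta-1}$ — the factor $(\beta-1)^{-1}$ appearing because the ratio of consecutive terms is $2^{-(\beta-1)}$ and $\sum_k 2^{-k(\beta-1)}=(1-2^{-(\beta-1)})^{-1}\le C/(\beta-1)$ for $\beta$ near $1$.

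Third, I would combine the two pieces and rewrite everything in terms of ${\rm diam}(E)$. The near-region contribution is $\dfrac{C}{\beta-1}\underline{\rho_E}^{\beta-1}=\dfrac{C}{\beta-1}\cdot\dfrac{\underline{\rho_E}}{\underline{\rho_E}}\cdot\underline{\rho_E}^{\beta-1}\le \dfrac{C}{\beta-1}\cdot\dfrac{\overline{\rho_E}}{\underline{\rho_E}}\cdot d^{\beta-1}$ after bounding $\underline{\rho_E}\le\overline{\rho_E}\le d$; this matches the first term in the bracket times the prefactor $\overline{\rho_E}/\underline{\rho_E}$. The far-region contribution must be arranged to produce the $\big(\overline{\rho_E}/\underline{\rho_E}\big)^{n-1}d^{\beta-1}=\big(\overline{\rho_E}/\underline{\rho_E}\big)\cdot\big(\overline{\rho_E}/\underline{\rho_E}\big)^{n-2}\cdot d^{\beta-1}$ term, i.e. the second summand in the bracket times the same prefactor. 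Adding the two and factoring out $\overline{\rho_E}/\underline{\rho_E}$ gives exactly the asserted inequality.

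The main obstacle I anticipate is the slicing lemma for $\mu(\partial E\cap B_r(x))$ with the correct power of $\overline{\rho_E}$: one must be careful that $\partial E\cap B_r(x)$ is not itself the boundary of a convex set, so the bound should instead be phrased as $\mu(\partial E\cap B_r(x))\le \mu(\partial(E\cap B_r(x)))$, which does hold because $E\cap B_r(x)$ is convex and $\partial(E\cap B_r(x))$ splits into a piece of $\partial E$ and a piece of the sphere $\partial B_r(x)$, the latter having measure $\le C r^{n-1}$ — again one uses $r^{n-1}=r^{n-2}\cdot r$, with $r^{n-2}$ playing the role of an ``outer radius to the $n-2$'' term and $r$ the ``diameter''. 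The second delicate point is tracking the powers so the final right-hand side carries $\big({\rm diam}(E)\big)^{\beta-1}$ rather than a power of $\overline{\rho_E}$; this forces the judicious use of $\overline{\rho_E}\le d$ only at the very last step, never earlier, since replacing $\underline{\rho_E}$ by $d$ too early would destroy the sharp $\overline{\rho_E}/\underline{\rho_E}$ dependence that is needed downstream in the paper.
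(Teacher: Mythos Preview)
Your argument is correct and follows the same overall architecture as the paper: split at scale $\underline{\rho_E}$, handle the far piece by bounding the integrand by $\underline{\rho_E}^{\,\beta-n}$ and the total perimeter by $C\,\overline{\rho_E}^{\,n-1}$ (the paper proves this via the nonexpansive projection onto $E$, which is the same ``monotonicity under inclusion'' you invoke), and then treat the near piece separately.

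The one genuine difference is the near--$x$ estimate. The paper does not use your dyadic decomposition plus the convex slicing bound $\mu(\partial E\cap B_r(x))\le Cr^{n-1}$. Instead it observes that the cone over the inscribed ball $B_{\underline{\rho_E}}(p)$ with vertex $x$ lies in $\overline E$, which forces $\partial E\cap B_{\underline{\rho_E}/2}(x)$ to be the graph of a Lipschitz function with Lipschitz constant at most $4\,\overline{\rho_E}/\underline{\rho_E}=:4M$; then a single radial integral in the graph variables gives $\frac{C'M}{\beta-1}\,(\underline{\rho_E}/2)^{\beta-1}$ directly. Your route instead produces the sharper bound $\frac{C}{\beta-1}\,\underline{\rho_E}^{\,\beta-1}$ (no $M$), after which you insert $M\ge 1$ by hand to match the stated form. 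So the paper's graph argument explains where the prefactor $\overline{\rho_E}/\underline{\rho_E}$ ``comes from'' geometrically, while your slicing argument is a bit more robust (it needs only perimeter monotonicity for convex sets, not a graph representation) and shows that this prefactor on the $\frac{1}{\beta-1}$ term is in fact not needed. Either way the far region dominates and carries the factor $M^{n-1}$, so the final inequality is the same.

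Your flagged ``obstacle'' about landing on $d^{\beta-1}$ rather than a power of $\overline{\rho_E}$ is not an issue: since $\underline{\rho_E}\le \overline{\rho_E}\le {\rm diam}(E)\le 2\,\overline{\rho_E}$, all three scales are interchangeable up to the ratio $M$, and both your computation and the paper's simply use $\underline{\rho_E}^{\,\beta-1}\le d^{\,\beta-1}$ at the last step.
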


\begin{proof} 
We can suppose that $x$ is the origin. 
By definition, there exists~$p\in E$ such that~$
B_{ \underline{\rho_E} }(p)\subseteq E$. By convexity,
the convex envelope of~$0$ and~$B_{ \underline{\rho_E} }(p)$
lies in~$\overline{E}$. Up to a rotation, we can assume that
$p=(0,\dots,|p|)$. This easily implies, again by convexity, that 
$B_{ \underline{\rho_E}/2}(0)\cap \partial E$ is the graph
of a Lipschitz function~$f$, with Lipschitz constant bounded by~$2|p|/  \underline{\rho_E} \leq 4\overline{\rho_E} /\underline{\rho_E}$.

Let us set~$\delta:= \underline{\rho_E}/2$
and~$M := \overline{\rho_E} /\underline{\rho_E}$. In addition, let us denote by $C',C'',\dots$ constants depending only on $n$. We can estimate, using the fact that~$\beta>1$,


\begin{equation}\label{09ojATYUA}
\begin{split}
& \int_{\partial E\cap B_{\delta}}
\frac{d\mu(y)}{|y|^{n-\beta}}
\le \int_{ {y'\in\R^{n-1}}\atop{|y'|\le \delta } }
\frac{\sqrt{1+|\nabla f(y')|^2}}{|y'|^{n-\beta}}\,dy'
\\ &\qquad
\le C' M\, \int_0^{\delta} \frac{\tau^{n-2}}{
\tau^{n-\beta}}\,d\tau
= \frac{C' M}{\beta-1}\,\delta^{\beta-1}.
\end{split}
\end{equation}
The remaining part of the integral satisfies
\begin{equation}\label{rem}
 \int_{\partial E\setminus B_\delta}
\frac{d\mu(y)}{|y|^{n-\beta}}\le
 \frac{1}{\delta^{n-\beta}} 
\int_{\partial E\setminus B_\delta}
d\mu(y)
\le 
 \frac{\mu(
\partial E )}{\delta^{n-\beta}}.
\end{equation}
Now we observe that
\begin{equation}\label{FUORI}
\mu(\partial E)\le
\mu(B_{ \overline{\rho_E} } ).
\end{equation}
Indeed, we know that there exists~$q\in E$
such that~$B_{ \overline{\rho_E} }(q)\supseteq E$.
Let us denote by $\Pi_E : \rl^n \to E$ the projection on the convex set $E$.
Then $\Pi_E$ maps  $\partial B_{ \overline{\rho_E} }(q)$
onto $\partial E$ and is nonexpansive,  from which~\eqref{FUORI}
follows.

As a consequence of~\eqref{rem} and \eqref{FUORI}, we obtain that
$$  \int_{\partial E\setminus B_\delta}
\frac{d\mu(y)}{|y|^{n-\beta}}
\leq \frac{C''  {\, \overline{\rho_E} \, }^{n-1}}{\delta^{n-\beta}} 
= C'''  M^{n-1} \delta^{\beta-1}.$$
This and~\eqref{09ojATYUA} imply the desired result
(recall also~\eqref{diam:eq} and~\eqref{5.4}).
\end{proof}

Now we obtain a bound on the fractional mean curvature 
in terms of the integral quantity which appears in the last term of \eqref{eqH}.
In view of \eqref{asymptasquare}, one can consider this
estimate as the fractional counterpart of the elementary property that
the classical mean curvature is bounded by the norm of the second
fundamental form. An estimate of this kind is more delicate to obtain
in the nonlocal case, since the
fractional mean curvature
cannot be realized by the average of finitely many
directional curvatures, and so methods involving linear
algebra cannot be applied, see~\cite{Abatangelo}.
We give here a proof in the case of convex sets, but it is natural
to expect that a similar property should hold in a more general setting. 

\begin{prop}\label{hausvogteiplatz}
Let~$E\subset\R^n$ be a convex set with~$C^{1,\alpha}$ boundary,
with~$\alpha\in(s,1)$. Then, there exists~$C>0$, depending on~$n$ and on the ratio $\overline{\rho_E}/
\underline{\rho_E}$, such that, for every~$x\in\partial E$, we have
$$ H_s(x) \le C\, \big( {{\rm diam}\,(E)}\big)^{\frac{1-s}2}
\left( (1-s)
\int_{\partial E} \frac{1-\nu(y)\cdot\nu(x)}{|x-y|^{n+s}}\,d\mu(y) \right)^{\frac 12}.$$
\end{prop}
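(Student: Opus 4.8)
The plan is to derive the estimate from the boundary-integral representation~\eqref{defsurface},
\[
H_s(x)=2(1-s)\,\lim_{\ep\to0^+}\int_{\partial E\setminus B_\ep(x)}\frac{\langle y-x,\nu(y)\rangle}{|x-y|^{n+s}}\,d\mu(y),
\]
combined with the convexity of $E$ and with Lemma~\ref{BETA}. The first observation is that, since $E$ is convex and $x\in\partial E$, the supporting hyperplane of $E$ at $y$ gives $\langle y-x,\nu(y)\rangle\ge0$; hence the integrand above is nonnegative, so by monotone convergence the limit in~\eqref{defsurface} equals the ordinary integral over $\partial E$, which is finite and equal to $H_s(x)/(2(1-s))$. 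The second observation is that the supporting hyperplane at $x$ gives $\langle y-x,\nu(x)\rangle\le0$, whence
\[
0\le\langle y-x,\nu(y)\rangle\le\langle y-x,\,\nu(y)-\nu(x)\rangle\le|y-x|\,|\nu(y)-\nu(x)|=\sqrt2\,|y-x|\,\bigl(1-\nu(y)\cdot\nu(x)\bigr)^{1/2},
\]
where in the last identity I use $|\nu(y)-\nu(x)|^2=2(1-\nu(y)\cdot\nu(x))$.

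Next I would insert this pointwise bound and apply the Cauchy--Schwarz inequality on $\partial E$, splitting the weight as $|x-y|^{-(n+s-1)}=|x-y|^{-(\frac{n+s}{2}-1)}\cdot|x-y|^{-\frac{n+s}{2}}$. This produces
\[
H_s(x)\le 2\sqrt2\,(1-s)\left(\int_{\partial E}\frac{d\mu(y)}{|x-y|^{n+s-2}}\right)^{\!1/2}\left(\int_{\partial E}\frac{1-\nu(y)\cdot\nu(x)}{|x-y|^{n+s}}\,d\mu(y)\right)^{\!1/2}.
\]
The exponent in the first factor is $n+s-2=n-(2-s)$ with $2-s>1$ (because $s<1$), so Lemma~\ref{BETA} with $\beta=2-s$ applies and bounds that integral by $C\,M\bigl[\tfrac1{1-s}+M^{\,n-2}\bigr]\bigl({\rm diam}\,(E)\bigr)^{1-s}$, where $M:=\overline{\rho_E}/\underline{\rho_E}$ and $C=C(n)$.

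Finally I would split the prefactor $(1-s)=(1-s)^{1/2}(1-s)^{1/2}$: one half-power is attached to the square root of the Lemma~\ref{BETA} bound, where the factor $1-s$ cancels the $\tfrac1{1-s}$ term (and absorbs the remaining bracket term, using $1-s<1$), yielding $\bigl((1-s)\!\int_{\partial E}|x-y|^{-(n+s-2)}\,d\mu\bigr)^{1/2}\le C_{n,M}\,\bigl({\rm diam}\,(E)\bigr)^{(1-s)/2}$; the other half-power stays inside the second square root. This gives exactly the asserted inequality, with $C$ depending only on $n$ and $M$ (and when the right-hand side is infinite there is nothing to prove). I expect the only delicate point to be this bookkeeping of the powers of $1-s$: the apparently singular factor $\tfrac1{1-s}$ coming from Lemma~\ref{BETA} is precisely compensated, and the Cauchy--Schwarz split must be chosen so that the leftover weight exponent lands in the admissible range $\beta>1$ of that lemma. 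The convexity inequalities and the convergence of the integrals are routine.
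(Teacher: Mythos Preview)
Your proof is correct and follows essentially the same approach as the paper's: both use the convexity inequality $\langle y-x,\nu(x)\rangle\le0$ to replace $\nu(y)$ by $\nu(y)-\nu(x)$ in the surface representation~\eqref{defsurface}, then apply Cauchy--Schwarz with the same splitting of the kernel, and finish with Lemma~\ref{BETA} at $\beta=2-s$. Your write-up is in fact slightly more detailed than the paper's, since you spell out explicitly how the factor $\tfrac{1}{1-s}$ from Lemma~\ref{BETA} is absorbed by distributing the prefactor $(1-s)=(1-s)^{1/2}(1-s)^{1/2}$ across the two square roots; the paper only says the result ``follows easily'' at that point.
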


\begin{proof} Given~$x\in\partial E$, with exterior normal~$\nu(x)$,
from the convexity of~$E$
we have that~$\{p\in\R^n{\mbox { s.t. }}(p-x)\cdot\nu(x)>0\}$
touches~$E$ from outside at~$p$.
As a consequence, if~$y\in\partial E$, we have that~$(y-x)\cdot\nu(x)\le0$
and therefore, recalling \eqref{defsurface}, we have
\begin{eqnarray*}
\frac{1}{2(1-s)}H_s(x) &=& \int_{\partial E} \frac{(y-x)\cdot\nu(y)}{
|x-y|^{n+s}}\,d\mu(y)\\
&=& \int_{\partial E} \frac{(y-x)\cdot\nu(x)}{
|x-y|^{n+s}}\,d\mu(y)+
\int_{\partial E} \frac{(y-x)\cdot\big(\nu(y)-\nu(x)\big)}{
|x-y|^{n+s}}\,d\mu(y)
\\ &\le&
\int_{\partial E} \frac{(y-x)\cdot\big(\nu(y)-\nu(x)\big)}{
|x-y|^{n+s}}\,d\mu(y)
\\ &\le&
\int_{\partial E} \frac{\big|\nu(y)-\nu(x)\big|}{
|x-y|^{n+s-1}}\,d\mu(y)
\\ &=&
\int_{\partial E} \frac{\big|\nu(y)-\nu(x)\big|}{
|x-y|^{\frac{n+s}{2}}}\;\frac{d\mu(y)
}{
|x-y|^{\frac{n+s-2}{2}}
}
.\end{eqnarray*}
Hence, exploiting the
H\"older's Inequality,
$$ \frac{1}{2(1-s)}H_s(x)  \le
\sqrt{
\int_{\partial E} \frac{\big|\nu(y)-\nu(x)\big|^2}{
|x-y|^{{n+s}}} \,d\mu(y)}\quad
\sqrt{
\int_{\partial E} 
\frac{d\mu(y)
}{
|x-y|^{{n+s-2}}
} }.$$
Since we have
$
\big|\nu(y)-\nu(x)\big|^2 = 2(1- \nu(y) \cdot \nu(x)),
$ the desired result follows easily from Lemma~\ref{BETA} with~$\beta:=2-s>1$.
\end{proof}

\section{Upper bound on the fractional curvature}\label{S5}

In this section, we show that the bounds on the inner and outer radii imply that the fractional mean curvature of our solution is bounded from above. This, together with Corollary \ref{spittelmarkt}, will conclude the proof of Theorem \ref{main}.

To this purpose, we adapt to the nonlocal setting a technique originally introduced in \cite{Tso}. We consider the support function on the evolving hypersurface
$$
u(p,t)= \langle \, F(p,t) \, , \, \nu(p,t) \, \rangle.
$$
 
By Lemma \ref{evol-cl}-(iii) and the representation \eqref{tang2} of the gradient of $H_s$, we find that $u$ evolves according to 
\begin{eqnarray}
\partial_t u &  = & \langle \, \partial_t F \, , \, \nu \, \rangle + \langle \, F \, , \, \partial_t \nu \rangle \nonumber \\
& = & -H_s + h + \langle \, F \, , \, \nabla^\mm H_s \rangle \nonumber \\
& = &-H_s + h + 2s(1-s)\int_{\mm_t} \frac{x^T \cdot \nu(y)}{|y-x|^{n+s}} d\mu(y). \label{evfspt}
\end{eqnarray}

{F}rom Lemma \ref{sphinside}, we know that for any $t_0$ there exists $x_0 \in \rl^n$ such that $B_{R_1/2}(x_0) \subset E_t$ for any $t \in [t_0,t_0+t^*]$. For simplicity, we perform our computations in the case $x_0=0$. By the convexity of $E_t$, we deduce that $u \geq R_1/2$ on $\mm_t$ for all $t \in [t_0,t_0+t^*]$. We then set $\alpha=R_1/4$ and we consider the function
$$
W = \frac{H_s}{u-\alpha}.
$$


Since
$$
\alpha \leq u-\alpha \leq {\rm diam}(\mm_t) - \alpha \leq 2\overline{\rho_{E_t}} -\alpha,
$$
we deduce from Corollary \ref{spittelmarkt} that
\begin{equation}
\label{estWH}
\frac 1C \leq \frac{W}{H_s} \leq C,
\end{equation}
for some $C$ only depending on the $n,s$ and the initial data.

Let us now analyze the evolution equation satisfied by $W$. By Lemma \ref{evol-noncl}-(ii), 
the fractional mean curvature satisfies the equation
$$
\frac{\partial_t H_s}{2s(1-s)}
 =  \int_{\mm_t} \frac{H_s(y)-H_s(x)}{|y-x|^{n+s}} d\mu(y) +(H_s(x)-h(t)) 
\int_{\mm_t} \frac{1- \nu(y) \cdot \nu(x)}{|y-x|^{n+s}} d\mu(y).
$$
Recalling \eqref{evfspt} and neglecting the positive terms containing $h(t)$, we find
\begin{eqnarray}
\frac{\partial_t W(x,t)}{2s(1-s)} & = & \frac{1}{u(x)-\alpha} \int_{\partial E_t} \frac{H_s(y)-H_s(x)}{|y-x|^{n+s}} d\mu(y) + \frac{H_s(x)-h(t)}{u(x)-\alpha}
\int_{\partial E_t} \frac{1- \nu(y) \cdot \nu(x)}{|y-x|^{n+s}} d\mu(y) \nonumber \\
& & -\frac{H_s(x)}{(u(x)-\alpha)^2} \left( \frac{-H_s(x)+h(t)}{2s(1-s)} + \int_{\partial E_t} \frac{x^T \cdot \nu(y)}{|y-x|^{n+s}} d\mu(y)\right) \nonumber \\
& < & \frac{1}{u(x)-\alpha} \int_{\partial E_t} \frac{H_s(y)-H_s(x)}{|y-x|^{n+s}} d\mu(y) + \frac{H_s(x)}{u(x)-\alpha}
\int_{\partial E_t} \frac{1- \nu(y) \cdot \nu(x)}{|y-x|^{n+s}} d\mu(y) \nonumber \\
& & -\frac{H_s(x)}{(u(x)-\alpha)^2} \left( \frac{-H_s(x)}{2s(1-s)} + \int_{\partial E_t} \frac{x^T \cdot \nu(y)}{|y-x|^{n+s}} d\mu(y) \right). \label{eprimo}
\end{eqnarray}

We can rewrite 
\begin{equation}
\int_{\partial E_t} \frac{H_s(y)-H_s(x)}{|y-x|^{n+s}} d\mu(y)
 = \int_{\partial E_t} (u(y)-\alpha) \frac{W(y)-W(x)}{|y-x|^{n+s}} d\mu(y) +
W(x) \int_{\partial E_t} \frac{u(y)-u(x)}{|y-x|^{n+s}} d\mu(y). \label{esecondo}
\end{equation}

Observe also
\begin{eqnarray}
\lefteqn{\int_{\partial E_t} \frac{u(y)-u(x)}{|y-x|^{n+s}} d\mu(y)
 =  
\int_{\partial E_t} \frac{y \cdot \nu(y) - x \cdot \nu(x)}{|y-x|^{n+s}} d\mu(y)} \nonumber \\
& = & \int_{\partial E_t} \frac{(y - x) \cdot \nu(y)}{|y-x|^{n+s}} d\mu(y) +
\int_{\partial E_t} \frac{ ( x^T + u(x) \nu(x) ) \, \cdot \, ( \nu(y) -\nu(x) )}{|y-x|^{n+s}} d\mu(y) \nonumber \\
& = & \frac{1}{2(1-s)} H_s(x) +
\int_{\partial E_t} \frac{ x^T \cdot  \nu(y) }{|y-x|^{n+s}} d\mu(y)
- u(x) \int_{\partial E_t} \frac{ 1- \nu(x) \cdot \nu(y)}{|y-x|^{n+s}} d\mu(y). \label{eterzo}
\end{eqnarray}

{F}rom \eqref{esecondo} and~\eqref{eterzo} we deduce that
\begin{eqnarray*}
&& \frac{1}{u(x)-\alpha} \int_{\partial E_t} \frac{H_s(y)-H_s(x)}{|y-x|^{n+s}} d\mu(y) -\frac{H_s(x)}{(u(x)-\alpha)^2} \int_{\partial E_t} \frac{x^T \cdot \nu(y)}{|y-x|^{n+s}} d\mu(y) \\
& = & 
\frac{1}{u(x)-\alpha}\int_{\partial E_t} \frac{(W(y)-W(x))(u(y)-\alpha)}{|y-x|^{n+s}} d\mu(y) +  \frac{1}{2(1-s)}  W^2 \\
&& -u(x)\frac{W(x)}{u(x)-\alpha} \int_{\partial E_t} \frac{ 1 - \nu(x) \cdot \nu(y)}{|y-x|^{n+s}} d\mu(y).
\end{eqnarray*}


We then conclude from \eqref{eprimo}
\begin{eqnarray}\label{W_t}
\frac{\partial_t W}{2s(1-s)}
& < &  \frac{1}{u(x)-\alpha}\int_{\partial E_t} \frac{(W(y)-W(x))(u(y)-\alpha)}{|y-x|^{n+s}} d\mu(y) \nonumber \\
& &  +  \frac{1+s}{2s(1-s)}  W^2
-\alpha\frac{W(x)}{u(x)-\alpha} \int_{\partial E_t} \frac{ 1 - \nu(x) \cdot \nu(y)}{|y-x|^{n+s}} d\mu(y).
\end{eqnarray}

Recalling the estimate of Proposition \ref{hausvogteiplatz} and \eqref{estWH}, we immediately obtain

\begin{cor}
At any point where the spatial maximum for $W(\cdot,t)$ is attained, we have
\begin{equation} \label{ineqW}
 \partial_t W < C_1 W^2 - C_2 W^3
 \end{equation}
 for constants $C_1,C_2$ only depending on $n,s$ and the initial data.
\end{cor}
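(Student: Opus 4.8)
The plan is to read off \eqref{ineqW} directly from the differential inequality \eqref{W_t}, by specializing it to a point where $W(\cdot,t)$ is maximal and then invoking Proposition~\ref{hausvogteiplatz} to control the last integral term.

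First I would fix $t\in[t_0,t_0+t^*]$ and choose $x\in\mm_t$ realizing $\max_{\mm_t}W(\cdot,t)$; such a point exists because $\mm_t=\partial E_t$ is compact, and $W(x)>0$ since $H_s\ge K_1>0$ and $u-\alpha>0$ by Corollary~\ref{spittelmarkt} and the choice $\alpha=R_1/4$. At $x$ one has $W(y)\le W(x)$ for every $y\in\mm_t$, while $u(y)-\alpha\ge\alpha>0$; hence the first integrand in \eqref{W_t} is nonpositive, and dividing by $u(x)-\alpha>0$ the whole first term may be discarded, leaving
$$ \frac{\partial_t W}{2s(1-s)} \;<\; \frac{1+s}{2s(1-s)}\,W^2 \;-\; \alpha\,\frac{W(x)}{u(x)-\alpha}\int_{\partial E_t}\frac{1-\nu(x)\cdot\nu(y)}{|x-y|^{n+s}}\,d\mu(y). $$

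Second, I would bound the surviving integral from below. Proposition~\ref{hausvogteiplatz} applies because the $C^{2,\beta}$ regularity with $\beta>s$ gives a $C^{1,\alpha}$ boundary for some $\alpha\in(s,1)$, and by Corollary~\ref{spittelmarkt} the ratio $\overline{\rho_{E_t}}/\underline{\rho_{E_t}}$ is bounded by $R_2/R_1$, so the constant there depends only on $n$ and on $R_2/R_1$. Squaring that estimate gives
$$ \int_{\partial E_t}\frac{1-\nu(x)\cdot\nu(y)}{|x-y|^{n+s}}\,d\mu(y) \;\ge\; \frac{H_s(x)^2}{C^2\,(1-s)\,\big({\rm diam}\,E_t\big)^{1-s}}. $$
Into this I would feed ${\rm diam}\,E_t\le 2R_2$, the bound $\frac{\alpha}{u(x)-\alpha}\ge\frac{\alpha}{2R_2}$ (again from Corollary~\ref{spittelmarkt}), and the identity $H_s(x)=W(x)(u(x)-\alpha)$ together with $u(x)-\alpha\ge\alpha$, so that $H_s(x)^2\ge\alpha^2 W(x)^2$. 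This turns the last term of the displayed inequality into $-c\,(1-s)^{-1}W^3$ with $c>0$ depending only on $n,s,R_1,R_2$, and crucially the factor $(1-s)^{-1}$ cancels the $(1-s)$ on the left-hand side. Multiplying through by $2s(1-s)$ then yields $\partial_t W<(1+s)W^2-C_2 W^3$, i.e. \eqref{ineqW} with $C_1=1+s$ and $C_2=2sc$.

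I do not expect any substantial obstacle here: every inequality used is pointwise, so no differentiation of $\max_{\mm_t} W$ in $t$ is required. The only steps demanding care are the bookkeeping of the powers of $(1-s)$ and of ${\rm diam}\,E_t$, and checking — which is immediate from Corollary~\ref{spittelmarkt} and Proposition~\ref{hausvogteiplatz} — that all constants ultimately depend only on $n$, $s$ and $E_0$ through $R_1$ and $R_2$.
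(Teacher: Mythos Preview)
Your proposal is correct and follows exactly the approach indicated by the paper, which simply says the result is immediate from Proposition~\ref{hausvogteiplatz} and \eqref{estWH}; you have just written out the details of discarding the nonpositive diffusion term at a spatial maximum and then squaring the estimate of Proposition~\ref{hausvogteiplatz} with the uniform radius bounds. One cosmetic remark: you use the symbol $\alpha$ both for the constant $R_1/4$ and for a H\"older exponent in the $C^{1,\alpha}$ check, so it would be cleaner to rename one of them.
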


We are now ready to prove the upper bound on the fractional mean curvature.

\begin{thm}\label{upper-bound}
Let $E_0$ be a convex subset of $\R^n$ and $\mm_0=\partial E_0$. Let $F:\mm_0 \times [0,T) \to \rl^n$, with $0 <T \leq +\infty$, be a  solution of \eqref{flow} of class $C^{2,\beta}$ for some $\beta>s$. Then there exists $K_2>0$, only depending on $n,\,s,\,E_0$, such that
$$
H_s(p,t) \leq K_2 \qquad p \in \mm_0,
$$
for all $t \in [0,T)$.
\end{thm}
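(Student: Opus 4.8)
The plan is to feed the pointwise differential inequality \eqref{ineqW} into an ODE comparison argument, exploiting the fact that, since the cubic term $-C_2W^3$ dominates $C_1W^2$ for large $W$, the estimate produced at the \emph{end} of a short time interval does not remember how large $W$ was at the \emph{beginning} of that interval. By \eqref{estWH} it is enough to bound $W=H_s/(u-\alpha)$ from above by a constant depending only on $n$, $s$ and $E_0$.

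Fix $t_0\ge0$. By Lemma~\ref{sphinside} there is $x_0\in\R^n$, which we translate to the origin, with $B_{R_1/2}(x_0)\subset E_t$ for all $t\in[t_0,t_0+t^*]$; hence $u\ge R_1/2$ and, with $\alpha=R_1/4$, the function $W$ is well defined and positive on $\mm_0\times[t_0,t_0+t^*]$, with $u-\alpha\ge R_1/4$. Since $H_s$, $\nu$ and the inner and outer radii are translation invariant, the constant $C$ in \eqref{estWH} and the constants $C_1,C_2$ in \eqref{ineqW} are independent of $t_0$. Set $w(t):=\max_{\mm_0}W(\cdot,t)$ on $[t_0,t_0+t^*]$. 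Because the flow is of class $C^{2,\beta}$ with $\beta>s$, Theorem~\ref{dH} together with the evolution equation \eqref{W_t} makes $W$ continuous on the compact set $\mm_0$ and $C^1$ in $t$, so by Hamilton's trick $w$ is locally Lipschitz and $w'(t)=\partial_tW(p_t,t)$ for a.e.\ $t$, at a maximizing point $p_t$. Combining this with \eqref{ineqW} at $(p_t,t)$ gives
\begin{equation*}
w'(t)\le C_1\,w(t)^2-C_2\,w(t)^3\qquad\text{for a.e.\ }t\in[t_0,t_0+t^*].
\end{equation*}

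Next I would analyse this inequality. Whenever $w>2C_1/C_2$ one has $w'\le-\tfrac12C_2w^3$, hence $(w^{-2})'\ge C_2$; tracking back to the last time (if any) at which $w=2C_1/C_2$ shows that, regardless of the value $w(t_0)$,
\begin{equation*}
w(t_1)\le\max\Big\{\tfrac{2C_1}{C_2},\ \big(C_2(t_1-t_0)\big)^{-1/2}\Big\}\qquad\text{for all }t_1\in(t_0,t_0+t^*].
\end{equation*}
In particular $w(t_0+t^*)\le\max\{2C_1/C_2,(C_2t^*)^{-1/2}\}=:C_3$, with $C_3$ depending only on $n,s,E_0$ (through $R_1$). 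Applying this with $t_0=t-t^*$ for every $t\ge t^*$ and invoking \eqref{estWH} yields $H_s(\cdot,t)\le C\,C_3$ on $\mm_t$ for $t\in[t^*,T)$. For the remaining interval $[0,t^*]$ I would instead take $t_0=0$ and compare $w$ with the solution of $\psi'=C_1\psi^2-C_2\psi^3$, $\psi(0)=w(0)$, which is nonincreasing as long as it exceeds $2C_1/C_2$; thus $w(t)\le\psi(t)\le\max\{w(0),2C_1/C_2\}$ there, and $w(0)=\max_{\mm_0}H_s(\cdot,0)/(u(\cdot,0)-\alpha)$ is bounded in terms of $n,s,E_0$ alone. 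Combining the two ranges and using \eqref{estWH} once more gives the bound $H_s\le K_2$ with $K_2=K_2(n,s,E_0)$.

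Most of the real work is already behind us: the geometric estimate of Proposition~\ref{hausvogteiplatz}, the radii bounds of Corollary~\ref{spittelmarkt}, and the computation of the evolution equation \eqref{W_t} leading to \eqref{ineqW}. The one genuinely delicate point is that Corollary~\ref{spittelmarkt} provides an inscribed ball whose centre may move in time, so $W$ is defined only on the short intervals furnished by Lemma~\ref{sphinside}; this is exactly why the cubic term is indispensable — it lets the bound at the end of each interval forget the possibly large value at its start, which is what turns a family of interval-by-interval estimates into a single uniform one. The remaining points (translation invariance of the constants, applicability of Hamilton's trick under the $C^{2,\beta}$ hypothesis) are routine.
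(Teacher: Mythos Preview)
Your proof is correct and follows essentially the same route as the paper: translate so that Lemma~\ref{sphinside} furnishes an inscribed ball on $[t_0,t_0+t^*]$, apply the ODE comparison to \eqref{ineqW} for the maximum of $W$, use the cubic term to obtain a bound at the end of the interval that is independent of $w(t_0)$, and treat $[0,t^*]$ separately via the initial data. The only cosmetic difference is that the paper compares with the implicit solution $\tilde w$ of $\tilde w'=C_1\tilde w^2-C_2\tilde w^3$ blowing up at $t=0^+$, whereas you carry out the equivalent estimate explicitly via $(w^{-2})'\ge C_2$ on the set $\{w>2C_1/C_2\}$.
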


\begin{proof}
Let us take an arbitrary $t_0 \in [0,T)$. We know  from Lemma \ref{sphinside} that there exists $x_0 \in \rl^n$ such that $B_{R_1/2}(x_0) \subset E_t$ for any $t \in [t_0,t_0+t^*]$. In addition, setting $W=H_s(\langle x-x_0,\nu\rangle-R_1/4)^{-1}$, we know that the maximum of $W$ satisfies inequality \eqref{ineqW} in this time interval. We need a little care because the point $x_0$ depends on $t_0$ and therefore the function $W$ is defined differently in different intervals.

Let us set for simplicity $F(w)=C_1w^2-C_2 w^3$ to denote the right-hand side of \eqref{ineqW}.  We observe that $F(w) <0$ for $w > C_1/C_2$. Let us denote by $\tilde w(t)$ the solution of the equation $\tilde w'(t)=F(\tilde w(t))$ defined for $t > 0$ and satisfying $\tilde w(t) \to +\infty$ as $t \to 0^+$. It is easily seen that such a function exists and is implicitly defined by the formula
$$
\int_{\tilde w(t)}^{+\infty} \frac{dw}{C_2w^3-C_1w^2}= t.
$$
In addition, $\tilde w(t)$ is defined for all $t \in (0,+\infty)$ and decreases monotonically from $+\infty$ to $C_1/C_2$. 

We now treat differently the cases $t_0=0$ and $t_0>0$. If $t_0=0$, using the sign properties of the right-hand side of  \eqref{ineqW}, we obtain
$$
W(p,t)  \leq \max \left \{ \max_{\mm_0} W, \frac{C_1}{C_2} \right \}, \qquad p \in \mm, t \in [0,t^*].
$$
Keeping into account \eqref{estWH}, this implies 
\begin{equation}
\label{est1}
H_s(p,t) \leq C',  \qquad p \in \mm, t \in [0,t^*],
\end{equation}
for a suitable constant $C'$. If $t_0>0$, we observe instead that, again by \eqref{ineqW},
$$
W(p,t_0+\tau) \leq \tilde w(\tau), \qquad \tau \in [0,t^*].
$$
In particular, since $\tilde w$ is monotone,
$$
W(p,t_0+\tau) \leq \tilde w(t^*/2), \qquad \tau \in [t^*/2,t^*].
$$
Using \eqref{estWH},
it follows that
\begin{equation}
\label{est2}
H_s(p,t) \leq C'',  \qquad p \in \mm, t \in \left[t_0+\frac{t^*}{2},t_0+t^*\right].
\end{equation}
By the arbitrariness of $t_0$, we conclude from \eqref{est1}--\eqref{est2} that $H_s(p,t) \leq K_2:=\max\{C',C''\}$, for all $p,t$.
\end{proof}

\section{The case of a nonlinear speed} \label{S6}
In this section we study a generalization of problem \eqref{flow}, in which the velocity is given by a general function of the fractional mean curvature. More precisely, we consider
\begin{equation}\label{non-homo}
\begin{cases}
\partial_t F(p,t)=[-\Phi(H_s(p,t))+\varphi(t)]\nu(p,t),& p\in \mathcal M_0, \,\, t\ge 0\\
F(p,0)=p & p \in \mathcal M_0,
\end{cases}
\end{equation}
where
$$\varphi(t)=\frac{1}{|\mathcal M_t|}\int_{\mathcal M_t}\Phi(H_s(x))d\mu.$$

We assume that  $\Phi: [0,+\infty) \rightarrow [0,+\infty)$ is a $C^2$ function, satisfying the following properties:
\begin{itemize}
\item [i)] $\lim_{a\rightarrow + \infty}\Phi(a)=+\infty$,\\
\item [ii)] $\Phi'(a)>0$ for every $a>0$,\\
\item [iii)] $ \lim_{a\rightarrow +\infty}\frac{\Phi'(a)a^2}{\Phi(a)}=+\infty.$\\
\end{itemize}

Typical examples are functions of the form $\Phi(a)=a^p$ with $p>0$,  but hold in many other cases, e.g.  $\Phi(a)=e^a$ or $\Phi(a)=\ln(a+1)$. 
 Assumption (ii) ensures that $\Phi(H_s)$ satisfies the monotonicity assumption (A) in \cite{CMPonsiglione}, Section 2 (monotonicity with respect to set inclusion). Hence, by Theorem~2.21 in \cite{CMPonsiglione}, problem \eqref{non-homo} is well posed and admits a viscosity solution, at least in the case $\varphi\equiv 0$ considered in that paper. As already recalled, no local existence result is available instead for smooth solutions. In the case of a general $\Phi(H_s)$, there is also no result on the invariance of convexity, since the result in \cite{CNRuffini} does not apply. In the classical case, convexity is preserved under some additional structural hypotheses on $\Phi$, see \cite{BS,AWei}. It is likely that a similar result should hold in the fractional case. We will not address this issue here and we will assume instead apriori the existence of a convex smooth solution.

The aim of this section is to prove that Theorem \ref{main} holds also for the more general problem \eqref{non-homo}. We first have the following lemma.
We denote, as before, by $E_t$ the set enclosed by $\mathcal M_t$.
\begin{lem}
Flow \eqref{non-homo} keeps the volume of $E_t$ constant and decreases its fractional perimeter $\text{Per}_s(E_t)$.
\end{lem}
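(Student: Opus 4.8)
The plan is to mimic exactly the computation done for the original flow \eqref{flow} in Section~\ref{S2}, just replacing the speed $V=-H_s+h$ by $V=-\Phi(H_s)+\varphi$ and keeping track of the analogous cancellations. The two assertions are independent and both follow from Lemma~\ref{evol-cl}-(vi) and Lemma~\ref{evol-noncl}-(i) applied with this new normal speed.

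First I would prove the volume invariance. By Lemma~\ref{evol-cl}-(vi), $\frac{d}{dt}|E_t|=\int_{\mm_t}V(x)\,d\mu = \int_{\mm_t}[-\Phi(H_s(x))+\varphi(t)]\,d\mu$. Pulling the constant $\varphi(t)$ out of the integral gives $-\int_{\mm_t}\Phi(H_s(x))\,d\mu+\varphi(t)|\mm_t|$, which vanishes by the very definition of $\varphi(t)=\frac{1}{|\mm_t|}\int_{\mm_t}\Phi(H_s(x))\,d\mu$. Hence $|E_t|$ is constant, exactly as in the linear case (this is the nonlocal analogue of the observation made right after Lemma~\ref{evol-cl}).

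Next, for the fractional perimeter, I would apply Lemma~\ref{evol-noncl}-(i), which gives
$$
\frac{d}{dt}{\rm Per}_s(E_t)=\int_{\mm_t}H_s(x)V(x)\,d\mu
=\int_{\mm_t}H_s(x)\big[-\Phi(H_s(x))+\varphi(t)\big]\,d\mu.
$$
Now I would use the constancy of $\varphi(t)$ together with its definition: writing $\varphi(t)|\mm_t|=\int_{\mm_t}\Phi(H_s(y))\,d\mu(y)$, the right-hand side becomes
$$
-\int_{\mm_t}H_s(x)\Phi(H_s(x))\,d\mu+\frac{1}{|\mm_t|}\left(\int_{\mm_t}H_s(x)\,d\mu\right)\left(\int_{\mm_t}\Phi(H_s(y))\,d\mu(y)\right),
$$
which is $-|\mm_t|$ times the covariance of $H_s$ and $\Phi(H_s)$ with respect to the normalized surface measure on $\mm_t$. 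Since $\Phi$ is increasing by assumption (ii), $H_s$ and $\Phi(H_s)$ are positively correlated, so this covariance is nonnegative; more concretely, I would invoke the elementary Chebyshev sum inequality (or, equivalently, expand $\frac{1}{2|\mm_t|^2}\iint_{\mm_t\times\mm_t}(H_s(x)-H_s(y))(\Phi(H_s(x))-\Phi(H_s(y)))\,d\mu(x)\,d\mu(y)\ge0$, each factor having the same sign by monotonicity of $\Phi$). Therefore $\frac{d}{dt}{\rm Per}_s(E_t)\le0$.

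There is no serious obstacle here; the only point requiring a word of care is the monotonicity/correlation step, which in the linear case $\Phi=\mathrm{id}$ reduces to the clean identity $\frac{d}{dt}{\rm Per}_s(E_t)=-\int_{\mm_t}(H_s-h)^2\,d\mu$ in \eqref{66}, and for general increasing $\Phi$ is replaced by the Chebyshev-type argument just described. I would also remark, as the paper does after Theorem~\ref{Alex}, that by the nonlocal Alexandrov theorem the decrease is strict unless $E_t$ is a ball, since equality in the correlation inequality forces $H_s$ to be constant on $\mm_t$.
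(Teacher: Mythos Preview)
Your argument is correct and coincides with the paper's approach: the volume part is exactly what the paper calls ``an easy consequence of the choice of $\varphi(t)$'', and your Chebyshev/covariance step for the perimeter is precisely the computation the paper invokes by reference to \cite{BS} (and carries out explicitly later in the proof of Proposition~\ref{convergence}, via the equivalent rewriting $\int_{\mm_t}(H_s-\overline H_s)(\Phi(\overline H_s)-\Phi(H_s))\,d\mu\le 0$ with $\Phi(\overline H_s)=\varphi$).
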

\begin{proof}
The first part of the statement is an easy consequence of the choice of $\varphi(t)$. The second part follows exactly as in the proof of Lemma 3.1 in \cite{BS} in the local case.
\end{proof}

The uniform bounds on inner and outer radii and the lower bound for $H_s$ are obtained exactly as for the $\Phi(H_s)=H_s$ case (see Section~\ref{SE:3}), since they just rely on convexity and on the fact that the flow preserves volume and decreases the $s$-perimeter. Hence, we immediately have the following
\begin{prop}\label {lower-bound}
Let $F:\mm_0 \times [0,T) \to \rl^n$, with $0 <T \leq +\infty$, be a smooth convex solution of \eqref{non-homo}. Then there exist positive constants $0 <R_1 \leq R_2$, only depending on $E_0$, such that
$$
R_1 \leq \underline{\rho_{E_t}} \leq \overline{\rho_{E_t}} \leq R_2, \qquad \forall t \in [0,T).
$$
In addition, there exists $K_1>0$ such that $H_s(p,t) \geq K_1$ for all $(p,t) \in \mm_0 \times [0,T)$.
\end{prop}

{F}rom the previous proposition, we deduce again that, by choosing a smaller radius, we can find a ball with fixed center which remains inside $E_t$ for a time interval with fixed length, that is Lemma \ref{sphinside} holds also for solutions of the nonlinear flow \eqref{non-homo}. The proof of this fact is again by a comparison argument and we refer to Lemma 3.6 in \cite{BS} for the details.

In order to prove the analogue of Theorem \ref{main} for the flow \eqref{non-homo}, it remains to establish the upper bound on $H_s$.

\begin{prop}\label{main2}
Let $F:\mm_0 \times [0,T) \to \rl^n$, with $0 <T \leq +\infty$, be a smooth convex solution of \eqref{non-homo}. We have that, at any time $t\in [0,T)$ 
$$\Phi(H_s)\le K_3,$$
where $K_3$ is a positive constant depending only on $n,\,s,$ and $E_0$.
\end{prop}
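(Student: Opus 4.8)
The plan is to mimic the strategy of Section~\ref{S5}, now tracking the function $W=\Phi(H_s)/(u-\alpha)$ along the flow~\eqref{non-homo}, and to show that at a spatial maximum of $W$ one still has a differential inequality of the form $\partial_t W \le C_1 W^2 - C_2 W^3$ (with constants depending on $n,s,E_0$), from which the bound follows exactly as in the proof of Theorem~\ref{upper-bound}. First I would record the evolution of the support function $u=\langle F,\nu\rangle$ under~\eqref{non-homo}: since $V=-\Phi(H_s)+\varphi$, Lemma~\ref{evol-cl}-(iii) and~\eqref{tang2} give $\partial_t u = -\Phi(H_s)+\varphi + 2s(1-s)\int_{\mm_t}\frac{x^T\cdot\nu(y)}{|y-x|^{n+s}}d\mu(y)$, the only change with respect to~\eqref{evfspt} being that $H_s$ is replaced by $\Phi(H_s)$ in the zeroth-order part. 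Next I would write the evolution of $\Phi(H_s)$ as $\partial_t\Phi(H_s)=\Phi'(H_s)\,\partial_t H_s$ and substitute~\eqref{eqH} with $V=-\Phi(H_s)+\varphi$; the forcing terms $\varphi$ enter with a favorable sign (as in passing from~\eqref{eprimo} to what follows) and can be discarded at the maximum.

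The key algebraic step is to decompose the nonlocal term in $\partial_t\Phi(H_s)$ in terms of $W$, in analogy with~\eqref{esecondo}: writing $\Phi(H_s(y))-\Phi(H_s(x)) = (u(y)-\alpha)(W(y)-W(x)) + W(x)(u(y)-u(x))$ and using the identity~\eqref{eterzo} for $\int_{\mm_t}\frac{u(y)-u(x)}{|y-x|^{n+s}}d\mu(y)$. At a spatial maximum of $W(\cdot,t)$ the term $\int_{\mm_t}\frac{(u(y)-\alpha)(W(y)-W(x))}{|y-x|^{n+s}}d\mu(y)\le 0$ (since $u(y)-\alpha\ge\alpha>0$), so it can be dropped. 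Collecting the remaining terms, and using that $\Phi'(H_s)\ge 0$, I expect to arrive at an estimate of the shape
$$
\frac{\partial_t W}{2s(1-s)} \le \frac{\Phi'(H_s)}{2s(1-s)}\,W\,\frac{\Phi(H_s)}{u-\alpha} + C\,W\int_{\partial E_t}\frac{1-\nu(x)\cdot\nu(y)}{|x-y|^{n+s}}d\mu(y) - c\,\alpha\,W\int_{\partial E_t}\frac{1-\nu(x)\cdot\nu(y)}{|x-y|^{n+s}}d\mu(y),
$$
where the first term comes from the quadratic-type contribution $\tfrac{1}{2(1-s)}\Phi'(H_s)H_s W$ and the bad boundary integral is controlled by Proposition~\ref{hausvogteiplatz}, which gives $\int_{\partial E_t}\frac{1-\nu(x)\cdot\nu(y)}{|x-y|^{n+s}}d\mu(y)\ge c\,(\mathrm{diam}\,E_t)^{s-1}\,H_s^2$. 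Using~\eqref{estWH} (which holds verbatim with $H_s$ replaced by $\Phi(H_s)$, by the same inner/outer radius bounds in Proposition~\ref{lower-bound}), namely $W\simeq \Phi(H_s)$, this yields at a maximum of $W$
$$
\partial_t W \le C_1\,\Phi'(H_s)\,H_s\,W - C_2\,H_s^2\,W .
$$
The point of hypothesis~(iii), $\Phi'(a)a^2/\Phi(a)\to+\infty$, is precisely that $\Phi'(H_s)H_s = o\big(\Phi(H_s)/H_s\big)\cdot H_s^2/\Phi(H_s)\cdot\Phi(H_s)$... more usefully: (iii) says $\Phi'(H_s)H_s \le \tfrac{C_2}{2C_1}H_s^2$ once $H_s$ is large, equivalently $\Phi'(H_s)\le \tfrac{C_2}{2C_1}H_s$ for $H_s$ large, which together with $W\simeq\Phi(H_s)$ and (i) (so large $W$ forces large $H_s$) upgrades the inequality to $\partial_t W \le C_1' W^{?} - C_2' W^{?}$ with the cubic term dominating for large $W$. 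Thus $\max_{\mm_t}W$ cannot exceed a threshold depending only on $n,s,E_0$, and by~\eqref{estWH} this bounds $\Phi(H_s)$.

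The main obstacle I anticipate is the bookkeeping with the factor $\Phi'(H_s)$: unlike the case $\Phi=\mathrm{id}$, the coefficient of the ``good'' quadratic (in $W$) term involves $\Phi'(H_s)H_s$, which is not comparable to $W^2$ a priori, so one must invoke~(iii) in the right quantitative form to guarantee that the cubic term (coming from the curvature lower bound via Proposition~\ref{hausvogteiplatz}) eventually wins. Concretely, the delicate point is to show that $\Phi'(H_s)H_s\,W \le \tfrac12 C_2 H_s^2 W + C$ for a constant $C=C(n,s,E_0)$, using (i) and~(iii) and the equivalence $W\simeq\Phi(H_s)$; once this is in hand the ODE comparison argument of Theorem~\ref{upper-bound} (splitting $t_0=0$ and $t_0>0$, using the barrier $\tilde w$) applies unchanged, with $\Phi(H_s)$ in place of $H_s$, and gives the stated bound $\Phi(H_s)\le K_3$.
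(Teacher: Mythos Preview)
Your overall strategy --- tracking $W=\Phi(H_s)/(u-\alpha)$, decomposing the nonlocal term via the identity~\eqref{eterzo}, dropping the $(W(y)-W(x))$ integral at a spatial maximum, and invoking Proposition~\ref{hausvogteiplatz} --- is exactly the paper's. But the differential inequality you write down is incorrect in two places, and as a result your use of hypothesis~(iii) is backwards.

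First, the negative term must carry a factor $\Phi'(H_s)$. The ``good'' integral $\int\frac{1-\nu(x)\cdot\nu(y)}{|x-y|^{n+s}}\,d\mu$ enters the evolution of $W$ only through $\Phi'(H_s)\,\partial_t H_s$ and through the $\Phi'(H_s)$-weighted decomposition of $\int\frac{\Phi(H_s(y))-\Phi(H_s(x))}{|x-y|^{n+s}}\,d\mu$; in both cases it comes multiplied by $\Phi'(H_s)$. Second, there is a positive term $W^2$ \emph{without} any $\Phi'$ factor, arising from $-\Phi(H_s)\,\partial_t u/(u-\alpha)^2$ and the zeroth-order piece $-\Phi(H_s)$ of $\partial_t u$. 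The correct inequality at a spatial maximum is therefore of the form
\[
\partial_t \tilde W \;\le\; C_1\,\tilde W^2 \;+\; \frac{\tilde W\,\Phi'(H_s)\,H_s}{u-\alpha}\bigl[C_2 - C_3\,H_s\bigr],
\]
not your $C_1\,\Phi'(H_s)H_s\,W - C_2\,H_s^2\,W$.

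With the correct form, the argument runs differently from what you propose. For $H_s$ large one has $C_2-C_3H_s\le -\tfrac{2}{3}C_3H_s$, so the positive $\Phi'(H_s)H_s$ contribution is absorbed by the negative $\Phi'(H_s)H_s^2$ one --- no hypothesis on $\Phi$ is needed here. Using $\tilde W(u-\alpha)=\Phi(H_s)$, the remaining competition is
\[
\partial_t \tilde W \;\le\; \tilde W^2\Bigl[\,C_1 - c\,\frac{\Phi'(H_s)\,H_s^2}{\Phi(H_s)}\,\Bigr],
\]
and \emph{this} is precisely where (iii) enters: it guarantees $\Phi'(H_s)H_s^2/\Phi(H_s)\to+\infty$, so the bracket is eventually $\le -1$ and $\partial_t \tilde W\le -\tilde W^2$. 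Your reading of (iii) as ``$\Phi'(H_s)\le \tfrac{C_2}{2C_1}H_s$ for $H_s$ large'' is false --- take $\Phi(a)=e^a$. Hypothesis~(iii) is a \emph{lower} bound on $\Phi'$ (relative to $\Phi/H_s^2$), not an upper bound, and it is needed to beat the $\Phi'$-free $W^2$ term, not the $\Phi'(H_s)H_s$ term.
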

\begin{proof}
The proof is similar to the one given in Section~\ref{S5}. We show in detail how the argument is adapted to the case of a general speed, for the sake of clarity.
We consider again the support function
$$u(p,t)=\langle F(p,t), \nu(p,t)\rangle.$$

If now $F$ evolves according to \eqref{non-homo}, recalling Lemma \ref{evol-cl}-(iii) and the expression for $\nabla^{\mathcal M} H_s$,
we have 
\begin{equation}\label{u_t}
\begin{split}
\partial_t u&=\langle \partial_tF, \nu\rangle+ \langle F, \partial_t\nu\rangle\\
&=-\Phi(H_s)+\varphi(t)+\Phi'(H_s)\langle F, \nabla^{\mathcal M} H_s\rangle\\
&=-\Phi(H_s)+\varphi(t)+2s(1-s)\Phi'(H_s)\int_{\mathcal M_t}\frac{x^T\cdot \nu(y)}{|x-y|^{n+s}}d\mu(y).
\end{split}
\end{equation}

Moreover, using Lemma \ref{evol-cl}-(ii), we have that the fractional mean curvature
satisfies
\begin{equation}\label{H_s-t}
\frac{\partial_t H_s(x)}{2s(1-s)}=\int_{\mathcal M_t}\frac{\Phi(H_s(y))-\Phi(H_s(x))}{|x-y|^{n+s}}d\mu(y) + (\Phi(H_s(x)-\varphi(t))\int_{\mathcal M_t}\frac{1-\nu(x)\cdot \nu(y)}{|x-y|^{n+s}}d\mu(y).\end{equation}
We define, similarly as before, but with the new velocity $\Phi(H_s)$,
$$W=\frac{\Phi(H_s)}{u(x)-\alpha},$$
where $\alpha$ is chosen in the same way as in Section~\ref{S5}. We have that
\begin{equation}\label{W1}
\begin{split}
\frac{\partial_ t W}{2s(1-s)}&=\frac{1}{2s(1-s)} \left[ \frac{\Phi'(H_s)\partial_t H_s}{u(x)-\alpha}-\frac{\Phi(H_s)\partial_ tu}{(u(x)-\alpha)^2} \right]\\
&=\frac{\Phi'(H_s(x))}{u(x)-\alpha}\left[\int_{\mathcal M_t}\frac{\Phi(H_s(y))-\Phi(H_s(x))}{|x-y|^{n+s}}d\mu(y)
 \right.\\
& \hspace{1em}+\left. (\Phi(H_s(x))-\varphi(t))\int_{\mathcal M_t}\frac{1-\nu(x)\cdot \nu(y)}{|x-y|^{n+s}}d\mu(y)\right]\\
&\hspace{1em}- \frac{\Phi(H_s(x))}{(u(x)-\alpha)^2}\left[\frac{-\Phi(H_s(x))+\varphi(t)}{2s(1-s)}+\Phi'(H_s(x))\int_{\mathcal M_t}\frac{x^T\cdot \nu(y)}{|x-y|^{n+s}}d\mu(y) \right]\\
&<\frac{\Phi'(H_s(x))}{u(x)-\alpha}\left[\int_{\mathcal M_t}\frac{\Phi(H_s(y))-\Phi(H_s(x))}{|x-y|^{n+s}}d\mu(y)
+ \Phi(H_s(x))\int_{\mathcal M_t}\frac{1-\nu(x)\cdot \nu(y)}{|x-y|^{n+s}}d\mu(y)\right]\\
&\hspace{1em}- \frac{\Phi(H_s(x))}{(u(x)-\alpha)^2}\left[\frac{-\Phi(H_s(x))}{2s(1-s)}+\Phi'(H_s(x))\int_{\mathcal M_t}\frac{x^T\cdot \nu(y)}{|x-y|^{n+s}}d\mu(y) \right].
\end{split}
\end{equation}

By the definition of $W$ we have that
\begin{equation}\label{A}
\begin{split}
&\int_{\mathcal M_t}\frac{\Phi(H_s(y))-\Phi(H_s(x))}{|x-y|^{n+s}}d\mu(y)\\
&\hspace{1em}=\int_{\mathcal M_t} (u(y)-\alpha)\frac{(W(y)-W(x))}{|x-y|^{n+s}}d\mu(y) + W(x)\int_{\mathcal M_t}\frac{u(y)-u(x)}{|x-y|^{n+s}}d\mu(y).
\end{split}
\end{equation}

Moreover, formula \eqref{eterzo} holds unchanged, since it is independent on the velocity:
\begin{equation}\label{B}
\begin{split}
&\int_{\mathcal M_t}\frac{u(y)-u(x)}{|x-y|^{n+s}}d\mu(y) \\
&\hspace{1em}  =\frac{1}{2(1-s)}H_s(x)+\int_{\mathcal M_t}\frac{x^T\cdot \nu(y)}{|x-y|^{n+s}} d\mu(y)
- u(x)\int_{\mathcal M_t}\frac{1-\nu(x)\cdot \nu(y)}{|x-y|^{n+s}}d\mu(y).
\end{split}
\end{equation}

We combine now \eqref{A} and \eqref{B} to get
\begin{equation}\label{AB}
\begin{split}
&\frac{1}{u(x)-\alpha}\int_{\mathcal M_t}\frac{\Phi(H_s(y))-\Phi(H_s(x))}{|x-y|^{n+s}}d\mu(y)\\
&\hspace{1em} =\frac{1}{u(x)-\alpha}\int_{\mathcal M_t} (u(y)-\alpha)\frac{W(y)-W(x)}{|x-y|^{n+s}}d\mu(y) \\
&\hspace{2em} + \frac{W(x)}{u(x)-\alpha}\left[\frac{1}{2(1-s)}H_s(x)+ \int_{\mathcal M_t}\frac{x^T\cdot \nu(y)}{|x-y|^{n+s}}d\mu(y)-u(x)\int_{\mathcal M_t}\frac{1-\nu(x)\cdot \nu(y)}{|x-y|^{n+s}} d\mu(y)\right]\\
&\hspace{1em} =\frac{1}{u(x)-\alpha}\int_{\mathcal M_t} (u(y)-\alpha)\frac{W(y)-W(x)}{|x-y|^{n+s}}d\mu(y) + \frac{H_s(x)\Phi(H_s(x))}{2(1-s)(u(x)-\alpha)^2} \\
&\hspace{2em} + \frac{W(x)}{u(x)-\alpha} \int_{\mathcal M_t}\frac{x^T\cdot \nu(y)}{|x-y|^{n+s}}d\mu(y)-\frac{W(x)}{u(x)-\alpha} u(x)\int_{\mathcal M_t}\frac{1-\nu(x)\cdot \nu(y)}{|x-y|^{n+s}} d\mu(y).  
\end{split}
\end{equation}

Finally, plugging \eqref{AB} into \eqref{W1}, we obtain
\begin{equation}\label{final}
\begin{split}
\frac{\partial_t W}{2s(1-s)}&<\Phi'(H_s)\left[\frac{1}{u(x)-\alpha}\int_{\mathcal M_t}\frac{(W(y)-W(x))(u(y)-\alpha)}{|x-y|^{n+s}}d\mu(y) \right]\\
&\hspace{1em}   \frac{W^2}{2s(1-s)}+ \Phi'(H_s)W\left[ \frac{H_s(x)}{2(1-s)(u(x)-\alpha)} - 
\frac{\alpha}{u(x)-\alpha}\int_{\mathcal M_t}\frac{1-\nu(x)\cdot \nu(y)}{|x-y|^{n+s}}d\mu(y)\right].
\end{split}
\end{equation}
This inequality is the analogue of estimate \eqref{W_t} in the presence of a nonlinear speed $\Phi$. Again, we use Proposition \ref{hausvogteiplatz} to bound the last term and we get
\begin{equation}
\begin{split}
\frac{\partial_t W}{2s(1-s)}&<\Phi'(H_s)\left[\frac{1}{u(x)-\alpha}\int_{\mathcal M_t}\frac{(W(y)-W(x))(u(y)-\alpha)}{|x-y|^{n+s}}d\mu(y) \right]\\
&\hspace{1em}   +C_1 W^2+ \frac{W\Phi'(H_s)H_s}{(u(x)-\alpha)}\left[ C_2 - 
C_3H_s\right].
\end{split}
\end{equation}
Setting $\tilde W(t)=\sup_{\mathcal M_t} W(x,t)$, we have
$$
\partial_t \tilde W(t)\le C_1 \tilde W^2+\frac{\tilde W \Phi'(H_s) H_s}{u-\alpha}\left[ C_2 - 
C_3H_s\right],
$$
where $H_s=H_s(\tilde x,t)$ for a suitable $\tilde x$  such that $W(\tilde x,t)=\tilde W(t)$.

We choose now $K>3C_2/C_3$, so that $H_s\ge K$ implies $C_2-C_3H_s\le -\frac{2}{3}C_3H_s$. Suppose now that there exists $t^*$ such that $\tilde W(t^*)\ge \frac{\Phi(K)}{\alpha}$. Recalling that $u-\alpha \ge \alpha$ and using the monotonicity of $\Phi$, we deduce that $H_s(x^*,t^*) \ge K$ for any $x^*$ such that $ W(x^*,t^*)=\tilde W(t^*)$.  Hence, at $t=t^*$, we have
$$
\partial_t \tilde W\le C_1 \tilde W^2-\frac{2C_3\tilde W \Phi'(H_s) H_s^2}{3(u-\alpha)}\le \tilde W^2 \left[C_1-\frac{2C_3}{3}\frac{\Phi'(H_s)H_s^2}{\Phi(H_s)}\right].
$$
By property iii) of $\Phi$, we can choose $K$ large enough so that if $ H_s\ge K$ we have
$$C_1-\frac{2C_3}{3}\frac{\Phi'(H_s)H_s^2}{\Phi(H_s)}<-1,$$
which gives 
$$\partial_t \tilde W\leq -\tilde W^2.$$

From this last estimate, the conclusion follows by a comparison argument, exactly as in the proof of Proposition 3.7 in \cite{BS}.
\end{proof}

As a consequence of the boundedness of the speed $\Phi(H_s)$ and of property i) satisfied by $\Phi$, and recalling Proposition \ref{lower-bound}, we deduce the following

\begin{cor}\label{upper-bound-H}
We have that $H_s$ is uniformly bounded in $(0,T)$.
\end{cor}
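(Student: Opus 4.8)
The plan is simply to invert the monotone function $\Phi$ on the range of values in which $H_s$ lives. First I would recall from Proposition~\ref{lower-bound} that $H_s(p,t)\ge K_1>0$ for all $(p,t)\in\mm_0\times[0,T)$, so that it remains only to produce a uniform upper bound for $H_s$. By Proposition~\ref{main2} we already know that $\Phi(H_s(p,t))\le K_3$ for all such $(p,t)$, with $K_3$ depending only on $n,s,E_0$; hence it suffices to check that the set $\{a\ge K_1:\Phi(a)\le K_3\}$ is bounded.

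Next I would combine properties~i) and~ii) of $\Phi$. By~ii), $\Phi$ is continuous and strictly increasing on $[K_1,+\infty)$ (here it is important that $K_1>0$), while by~i) we have $\Phi(a)\to+\infty$ as $a\to+\infty$. Consequently there exists a constant $a_0=a_0(n,s,E_0)\ge K_1$ such that $\Phi(a)>K_3$ for every $a>a_0$ (for instance the unique root of $\Phi(a_0)=K_3$ when $\Phi(K_1)\le K_3$, and $a_0:=K_1$ otherwise).

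Finally I would conclude by contradiction: if we had $H_s(p,t)>a_0$ at some $(p,t)\in\mm_0\times[0,T)$, then by the previous step $\Phi(H_s(p,t))>K_3$, contradicting Proposition~\ref{main2}. Therefore $H_s(p,t)\le a_0$ for all $(p,t)$, and combined with Proposition~\ref{lower-bound} this gives $K_1\le H_s\le a_0$ throughout $\mm_0\times[0,T)$, which is the claimed uniform bound. No genuine obstacle arises here; the only point deserving a word of care is that property~ii) furnishes strict monotonicity only on $(0,+\infty)$, but this is immaterial since Proposition~\ref{lower-bound} confines $H_s$ to $[K_1,+\infty)$ with $K_1>0$.
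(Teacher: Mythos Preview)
Your proof is correct and follows essentially the same approach as the paper: the corollary is stated there as an immediate consequence of the bound $\Phi(H_s)\le K_3$ from Proposition~\ref{main2}, property~i) of $\Phi$, and the lower bound from Proposition~\ref{lower-bound}. The only minor difference is that the paper appeals solely to property~i) (since $\Phi(a)\to+\infty$ already forces $\{a:\Phi(a)\le K_3\}$ to be bounded, without any monotonicity), whereas you additionally invoke~ii); this extra hypothesis is harmless but unnecessary.
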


\section{Convergence to a sphere}\label{S7}
In this section we prove our convergence result (Theorem \ref{sphere} for the case $\Phi(H_s)=H_s)$, that for the general problem \eqref{non-homo} reads as follows
\begin{thm}\label{sphere-Phi}
Let $F:\mm_0 \times [0,T) \to \rl^n$, with $0 <T \leq +\infty$, be a smooth convex solution of \eqref{non-homo} of class $C^{2,\beta}$ for some $\beta>s$ which satisfies property {\bf (R) }. Then $T=+\infty$, and $\mm_t$  converges to a round sphere as $t \to +\infty$ in $C^{2,\beta}$ norm, possibly up to translations.
\end{thm}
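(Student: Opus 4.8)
The plan is to deduce the statement from the a priori estimates already in hand --- the uniform bounds on $H_s$ and on the inner and outer radii --- together with the monotonicity of the fractional perimeter along the flow and the nonlocal Alexandrov theorem; I would run the argument for the general flow~\eqref{non-homo}, the case $\Phi(H_s)=H_s$ being entirely analogous. The first point is global existence: by Corollary~\ref{upper-bound-H} and Proposition~\ref{lower-bound}, $H_s$ is bounded on $\mm_t$ for every $t\in[0,T)$, so property~{\bf (R)} applies with $T_0=T$. It furnishes a uniform bound on the $C^{2,\beta}$-norm of $\mm_t$ (up to translations) on all of $[0,T)$, and it forces the alternative $T_0=T=+\infty$; since here $T_0=T$, we get $T=+\infty$.

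Next I would use that the volume $|E_t|\equiv|E_0|$ stays constant while $\mathrm{Per}_s(E_t)$ is non-increasing --- by~\eqref{66} when $\Phi(H_s)=H_s$ and by the corresponding lemma of Section~\ref{S6} in general --- and bounded below by $\mathrm{Per}_s(B_{R_\infty})$, $B_{R_\infty}$ denoting a ball of volume $|E_0|$. Hence $\mathrm{Per}_s(E_t)\downarrow P_\infty\ge\mathrm{Per}_s(B_{R_\infty})$ and $\int_0^{+\infty}\bigl(-\tfrac{d}{dt}\mathrm{Per}_s(E_t)\bigr)\,dt<+\infty$. The integrand is a nonnegative ``defect'' which equals $\int_{\mm_t}[H_s-h(t)]^2\,d\mu$ in the linear case and, in general, the Chebyshev defect of the similarly ordered functions $H_s$ and $\Phi(H_s)$; in both cases it vanishes precisely when $H_s$ is constant on $\mm_t$. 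I would then select a sequence $t_k\to+\infty$ along which this defect tends to $0$.

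After translating $\mm_{t_k}$ so that its barycenter is at the origin, the uniform $C^{2,\beta}$ bound and the Arzel\`a--Ascoli theorem give a subsequence converging, in $C^{2,\beta'}$ for every $\beta'\in(s,\beta)$, to $\partial E_\infty$ with $E_\infty$ convex, $\partial E_\infty\in C^{2,\beta'}$, and $|E_\infty|=|E_0|$. Since $\beta'>s$, Theorem~\ref{dH} together with the uniform control of the inner and outer radii lets $H_s$, the average $h$ (resp.\ $\varphi$), and the defect pass to the limit; therefore the defect of $E_\infty$ vanishes, so $H_s$ is constant on $\partial E_\infty$, and by Theorem~\ref{Alex} the set $E_\infty$ is a ball --- necessarily $B_{R_\infty}$, by the volume constraint. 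In particular $P_\infty=\mathrm{Per}_s(B_{R_\infty})$: the flow pushes the fractional perimeter down to its isoperimetric infimum.

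Finally, to obtain convergence of the whole flow, I would take an arbitrary sequence $t_j\to+\infty$; recentering at the barycenter and extracting as above yields a $C^{2,\beta'}$ limit $E'$, convex with $|E'|=|E_0|$, and the lower semicontinuity of $\mathrm{Per}_s$ together with $\mathrm{Per}_s(E_{t_j})\to P_\infty$ gives $\mathrm{Per}_s(E')\le P_\infty=\mathrm{Per}_s(B_{R_\infty})$, while the fractional isoperimetric inequality gives the reverse bound. Equality, and the characterization of equality (balls only), then force $E'=B_{R_\infty}$; since every subsequential limit equals $B_{R_\infty}$, the family $\mm_t$, recentered at its barycenter, converges to $\partial B_{R_\infty}$ in $C^{2,\beta'}$ for each $\beta'<\beta$, hence in $C^{2,\beta}$ in view of the uniform bound of the first step. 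The step I expect to be the real obstacle is the limiting procedure used twice above: one must verify that the principal-value integral defining $H_s$, the average $h$ (resp.\ $\varphi$), and the perimeter defect depend continuously on the hypersurface under $C^{1,\beta'}$ convergence with $\beta'>s$ --- which is exactly where the uniform bounds on the inner and outer radii from Theorem~\ref{main} (resp.\ Proposition~\ref{lower-bound}) and the boundary representation formulas of Section~\ref{S2} enter; granting this, the remaining arguments are soft.
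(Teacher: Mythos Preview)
Your proof is correct and reaches the same conclusion, but the route to full convergence differs from the paper's. The paper first establishes the stronger intermediate statement (Proposition~\ref{convergence}) that $\max_{\mm_t}|\Phi(H_s)-\varphi(t)|\to 0$ uniformly, by combining the integrability of the perimeter defect with a uniform \emph{space--time} Lipschitz bound on $H_s$ (coming from the $C^{2,\beta}$ control and Theorem~\ref{dH}); once this is known, \emph{every} subsequential limit automatically has constant fractional curvature, and Alexandrov together with the volume constraint identifies it uniquely as $B_{R_\infty}$. You instead extract only a \emph{single} good sequence along which the $L^2$ defect vanishes, identify that limit as $B_{R_\infty}$ via Alexandrov, deduce $P_\infty=\mathrm{Per}_s(B_{R_\infty})$, and then upgrade to convergence of the whole flow through lower semicontinuity and the rigidity case of the fractional isoperimetric inequality. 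Your path trades the time--Lipschitz estimate for the characterization of equality in the isoperimetric inequality (a known result, see e.g.\ \cite{MR2425175, FFMMM}); it is arguably more elementary, but it does not yield the uniform curvature convergence of Proposition~\ref{convergence} as a byproduct. For the step you flag as the obstacle --- continuity of $H_s$ under $C^{1,\beta'}$ perturbations with $\beta'>s$ --- the paper appeals to the stability results of~\cite{Cozzi}.
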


We first observe that, by the lower and upper bounds on $H_s$, we have that $\Phi'(H_s)$ is bounded from above and below by positive constants for every $t\in [0, +\infty)$.

The crucial step in the proof of Theorem \ref{sphere-Phi} is the following result.
\begin{prop}\label{convergence}
Under our assumption, we have that
$$\lim_{t \rightarrow +\infty} \max_{\mm_t}|\Phi(H_s(x)-\varphi(t)|=0.$$
\end{prop}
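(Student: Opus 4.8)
The strategy is the standard one for convergence of volume-preserving curvature flows: exploit the fact that the fractional perimeter is monotone nonincreasing and bounded below, so that its time derivative must vanish along a sequence of times, and then upgrade this to a full limit. Recall from~\eqref{66} (and its analogue for~\eqref{non-homo}) that
\[
\frac{d}{dt}{\rm Per}_s(E_t) = -\int_{\mm_t}\bigl[\Phi(H_s(x))-\varphi(t)\bigr]\bigl(H_s(x)-\text{(something)}\bigr)\,d\mu,
\]
and more precisely that $\frac{d}{dt}{\rm Per}_s(E_t)\le 0$ with a quantity comparable, up to the now-uniformly-bounded factor $\Phi'(H_s)$, to $\int_{\mm_t}(\Phi(H_s)-\varphi(t))^2\,d\mu$. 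Since ${\rm Per}_s(E_t)$ is bounded (by Proposition~\ref{lower-bound}/Corollary~\ref{spittelmarkt} the sets $E_t$ have uniformly bounded diameter, hence uniformly bounded fractional perimeter) and monotone, $\int_0^{+\infty}\!\int_{\mm_t}(\Phi(H_s)-\varphi(t))^2\,d\mu\,dt<+\infty$. Therefore there is a sequence $t_k\to+\infty$ with $\int_{\mm_{t_k}}(\Phi(H_s)-\varphi(t_k))^2\,d\mu\to 0$.

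The second step is to pass from this $L^2$-in-space, sequential statement to a uniform-in-$x$, full-limit statement. First I would upgrade the sequential $L^2$ decay to a sequential $L^\infty$ decay: using Corollary~\ref{upper-bound-H} and the assumed $C^{2,\beta}$ regularity together with property~{\bf (R)}, the functions $H_s(\cdot,t_k)$ are equibounded and equicontinuous (uniformly in $k$, up to translations), hence so are $\Phi(H_s(\cdot,t_k))-\varphi(t_k)$; an equicontinuous family whose $L^2$ norms tend to zero must tend to zero uniformly, so $\max_{\mm_{t_k}}|\Phi(H_s)-\varphi(t_k)|\to 0$. Then, to remove the restriction to the sequence, I would show that $t\mapsto\max_{\mm_t}|\Phi(H_s)-\varphi(t)|$ cannot oscillate: this follows from a Gr\"onwall-type control on the time derivative of $\max_{\mm_t}W$ (equivalently of $\max_{\mm_t}H_s$), using the differential inequality already derived in the proof of Proposition~\ref{main2} — at a spatial maximum the extra nonlocal integral term has a sign, and the remaining terms are controlled by the uniform bounds on $H_s$, $\underline{\rho_{E_t}}$, $\overline{\rho_{E_t}}$. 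Concretely, $\frac{d}{dt}\max_{\mm_t}\Phi(H_s)$ is bounded above by a constant, and a symmetric argument bounds $\frac{d}{dt}\varphi(t)$ and $\frac{d}{dt}\min_{\mm_t}\Phi(H_s)$ from below; hence the oscillation $\max_{\mm_t}|\Phi(H_s)-\varphi(t)|$ is Lipschitz in $t$, and a nonnegative Lipschitz function that vanishes along a sequence $t_k\to+\infty$, combined with the fact that its time derivative away from zero is controlled, forces it to vanish as $t\to+\infty$. (Alternatively, one argues by contradiction: if the quantity stayed above $\eta>0$ on a sequence of intervals of length bounded below, the perimeter would decrease by a fixed amount on each, contradicting boundedness.)

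The third, and I expect \emph{main}, obstacle is the regularity/compactness input that makes the ``$L^2\to L^\infty$'' passage legitimate, namely uniform equicontinuity of $H_s(\cdot,t)$ up to translations. This is exactly where property~{\bf (R)} is used: the uniform bound on $H_s$ from Corollary~\ref{upper-bound-H} gives, via~{\bf (R)}, a uniform $C^{2,\beta}$ bound on $\mm_t$ (modulo translation), and Theorem~\ref{dH} then yields that $H_s$ depends continuously — indeed with a modulus uniform in $t$ — on the local $C^{2,\beta}$ chart. One must be slightly careful that the translations do not spoil the argument: since $\Phi(H_s)-\varphi(t)$ is translation-invariant as a function on $\mm_t$, working up to translations is harmless. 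With this in hand, the interpolation ``equicontinuous $+$ $L^2\to 0$ $\Rightarrow$ $L^\infty\to 0$'' is routine, as is the final Gr\"onwall bookkeeping.

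\textbf{Consequences (sketch, not part of this proof).}
Once Proposition~\ref{convergence} is established, Theorem~\ref{sphere-Phi} follows in the usual way: the uniform $C^{2,\beta}$ bounds give subsequential convergence (up to translation) of $\mm_t$ to a limit hypersurface $\mm_\infty$ on which $\Phi(H_s)$ is constant, hence $H_s$ is constant (using $\Phi'>0$), so by the nonlocal Alexandrov theorem (Theorem~\ref{Alex}) $\mm_\infty$ is a sphere; the volume constraint fixes its radius, and a standard argument shows the convergence is along the full flow, not just a subsequence.
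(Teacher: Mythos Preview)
Your overall strategy is sound, and the ingredients you identify (perimeter monotonicity, the $L^2$ control on $\Phi(H_s)-\varphi$, uniform regularity from {\bf (R)}) are exactly the right ones. However, the main line of your step~5 contains a genuine gap: the claim that ``a nonnegative Lipschitz function that vanishes along a sequence $t_k\to+\infty$ \dots\ forces it to vanish as $t\to+\infty$'' is simply false (think of $|\sin t|$), and adding the phrase ``its time derivative away from zero is controlled'' does not rescue it, since that is nothing more than the Lipschitz bound itself. What \emph{does} work is either (a) combining the finiteness of $\int_0^{+\infty}\!\int_{\mm_t}(\Phi(H_s)-\varphi)^2\,d\mu\,dt$ with Lipschitz continuity of $t\mapsto\int_{\mm_t}(\Phi(H_s)-\varphi)^2\,d\mu$ to get the full $L^2$ limit, and \emph{then} upgrading via equicontinuity; or (b) the contradiction argument you mention parenthetically at the end.

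In fact, option (b) is precisely the paper's proof, carried out more directly than you suggest: the paper never passes through a sequential $L^2$ statement at all. It observes (using the mean-value inequality and the two-sided bounds on $\Phi'$) that $\frac{d}{dt}{\rm Per}_s(E_t)\le -c\int_{\mm_t}|\Phi(H_s)-\varphi|^2\,d\mu$, and then argues directly by contradiction: if $|\Phi(H_s)-\varphi|\ge\epsilon$ at some $(\bar p,\bar t)$, the uniform Lipschitz continuity of $H_s$ in \emph{both} space and time (from {\bf (R)} and Theorem~\ref{dH}) gives a space-time ball of fixed radius $r(\epsilon)$ on which $|\Phi(H_s)-\varphi|\ge\epsilon/2$, hence $\frac{d}{dt}{\rm Per}_s(E_t)\le-\eta(\epsilon)$ on a time interval of fixed length. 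Since ${\rm Per}_s(E_t)$ is positive and decreasing, this cannot recur for arbitrarily large $\bar t$. So your ``alternative'' is the paper's argument; drop the sequential detour and the faulty Lipschitz-implies-convergence step, and you have exactly the intended proof.
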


\begin{proof}
The proof follows the one in \cite{BS}, Proposition 4.4.
For any $t$, let $\overline H_s(t)$ be such that $\Phi(\overline H_s(t))=\varphi(t)$. Then, recalling \eqref{eqper}, we have
\begin{equation*}
\begin{split}
\frac{d}{dt}\text{Per}_s(E_t)&=\int_{\mm_t}H_s\varphi\, d\mu - \int_{\mm_t}H_s\Phi(H_s)\,d\mu\\
&= \int_{\mm_t}(H_s-\overline H_s)(\Phi(\overline H_s)-\Phi(H_s))\,d\mu\\
&=-\int_{\mm_t} |H_s-\overline H_s||\Phi(\overline H_s)-\Phi(H_s)|\,d\mu.
\end{split}
\end{equation*}

Hence, using the boundedness of $\Phi'$, we deduce that

\begin{equation*}
\frac{d}{dt}\text{Per}_s(E_t)\le -\frac{1}{\sup \Phi'}\int_{\mm_t}|\Phi(\overline H_s)-\Phi(H_s)|^2\, d\mu =
-\frac{1}{\sup \Phi'}\int_{\mm_t}|\Phi(\overline H_s)-\varphi|^2\, d\mu.
\end{equation*}

Suppose now, by contradiction, that there exists $\epsilon>0$ such that $|\Phi(H_s)-\varphi|=\epsilon$ at some point $(\bar p, \bar t)$.
By our regularity assumption and using Theorem \ref{dH}, we have that $H_s$ is uniformly Lipschitz, therefore there exists a uniform radius $r(\epsilon)>0$ for which
$$|\Phi(H_s)-\varphi|>\frac{\epsilon}{2}\quad \mbox{in}\,\, B((\bar p, \bar t), r(\epsilon)),$$
which implies
$$
\frac{d}{dt}\mbox{Per}_s(E_t)\le - \eta(\epsilon) \quad \mbox{for any }\, t \in [\bar t- r(\epsilon), \bar t+ r(\epsilon)],
$$
for some $\eta >0$.
The fact that $\mbox{Per}_s(E_t)>0$ and decreasing in time implies that the above property  cannot hold for $\bar t$ arbitrarily large. This shows that $|\Phi(H_s)-\varphi|$ tends to zero uniformly.
\end{proof}

We are now ready to give the proof of our convergence result.

\begin{proof}[Proof of Theorem \ref{sphere-Phi}]
Using our regularity assumption {\bf (R) } and the uniform bounds for $H_s$ of Corollary \ref{upper-bound-H} and Theorem \ref{main}, we deduce that the flow exists for all $t \in [0,\infty)$ and that the hypersurfaces  $\mm_t$, possibly up to translations, 
are bounded in the $C^{2,\beta}$ norm uniformly in $t$. Hence, the $\mm_t$ are precompact in $C^{2,\beta'}$ for $\beta'<\beta$. By Proposition \ref{convergence} and the stability results of \cite{Cozzi}, we have that any possible subsequential limit as $t \to +\infty$ has constant fractional curvature. Then Theorem~\ref{Alex} ensures that the limit is a ball, with radius uniquely determined by the volume constraint. The uniqueness of the  subsequential limit easily implies that the whole family $\mm_t$ converges to a sphere as $t \to +\infty$.
\end{proof}

\bibliographystyle{alpha}
\newcommand{\etalchar}[1]{$^{#1}$}

\end{document}